\documentclass{article}
\usepackage{fullpage}
\usepackage[utf8]{inputenc}
\usepackage{color}
\usepackage{amsthm}
\usepackage{amsfonts}
\usepackage{amsmath}
\usepackage{amssymb,bbm}
\usepackage{algorithm}
\usepackage{algpseudocode}
\usepackage{url}
\usepackage{hyperref}
\usepackage{thmtools}
\usepackage{thm-restate}
\usepackage{fullpage}
\usepackage{multicol}
\usepackage{multirow}
\usepackage[parfill]{parskip}
\newtheorem{theorem}{Theorem}
\newtheorem{definition}{Definition}

\newtheorem{corollary}{Corollary}
\newtheorem{proposition}{Proposition}
\newtheorem{lemma}[theorem]{Lemma}
\newtheorem{assumption}{Assumption}
\newtheorem{remark}{Remark}

\usepackage{pifont}
\usepackage{graphicx}
\usepackage{subcaption}
\usepackage{enumitem}

\usepackage{hoang}

\usepackage{xcolor}
\usepackage{tikz}

\AtBeginDocument{%
  }

\begin{document}

\title{Almost Sure Convergence of Stochastic Approximation: An Interplay of Noise and Step Size}

\author{Quang Nguyen\thanks{\noindent Department of Computer Science, University of Information Technology, Vietnam National University, Ho Chi Minh City, Vietnam} \, Duc Anh Nguyen\thanks{\noindent Institute of Operations Research and Analytics, National University of Singapore, Singapore}\,\\
Hoang Huy Nguyen\textsuperscript{$\ddagger$}\, 
Siva Theja Maguluri\thanks{\noindent H. Milton Stewart School of Industrial \& Systems Engineering, Georgia
Institute of Technology, Atlanta, GA, 30332, USA}}
\date{}

\maketitle

\begin{abstract}
We study the almost sure convergence of the Stochastic Approximation algorithm to the fixed point $x^\star$ of a nonlinear operator under a negative drift condition and a general noise sequence with finite $p$-th moment for some $p > 1$. Classical almost sure convergence results of Stochastic Approximation are mostly analyzed for the square-integrable noise setting, and it is shown that any non-summable but square-summable step size sequence is sufficient to obtain almost sure convergence. However, such a limitation prevents wider algorithmic applications. In particular, many applications in Machine Learning and Operations Research admit heavy-tailed noise with infinite variance, rendering such guarantees inapplicable. On the other hand, when a stronger condition on the noise is available, such guarantees on the step size would be too conservative, as practitioners would like to pick a larger step size for a more preferable convergence behavior. To this end, we show that any non-summable but $p$-th power summable step size sequence is sufficient to guarantee almost sure convergence, covering the gap in the literature.

Our guarantees are obtained using a universal Lyapunov drift argument. For the regime $p \in (1, 2)$, we show that using the Lyapunov function $\norm{x-x^\star}^p$ and applying a Taylor-like bound suffice. For $p > 2$, such an approach is no longer applicable, and therefore, we introduce a novel iterate projection technique to control the nonlinear terms produced by high-moment bounds and multiplicative noise.  We believe our proof techniques and their implications could be of independent interest and pave the way for finite-time analysis of Stochastic Approximation under a general noise condition.
\end{abstract}

\section{Introduction}
\label{sec:intro}
Many problems in modern Machine Learning (ML) and Operations Research (OR) can be cast as a fixed-point equation (FPE) problem for a certain operator with a noisy oracle \cite{bhandari-td-learning, qu-wierman-async, srikant-ying-td-learning, zaiwei-envelope, zaiwei-triad, sajad-federated-rl, hoang-nonlinear-sa, zaiwei-stochastic-game-2023, asymptotic-variance-shubhada-agrawal24a, Mertikopoulos2024-volkan-stochastic-approximation-games}. Under this noisy influence, such FPE is primarily solved using the Stochastic Approximation algorithm, which was first introduced by Robbins and Monro \cite{Robbins&Monro:1951}, or its first-order variant Stochastic Gradient Descent (SGD). The focus of this paper is to study almost sure asymptotic convergence to the fixed point of the Stochastic Approximation iterates, which can be written as
\begin{align}
    \label{equation: main_iterate}
    x_{k+1} = x_k + \alpha_k(H(x_k)-x_k+w_k).
\end{align}
Here, $H$ is the operator of interest, $\alpha_k$ is the step size and $w_k$ is a random variable that represents the noise. Classical almost sure convergence results on 
Stochastic Approximation algorithm 
consider the 
the square integrable noise setting, and establish almost sure convergence  as long as the step-size sequence satisfies the non-summable but square summable condition, i.e., $\sum \alpha_k = \infty, \sum \alpha_k^2 < \infty$.
\cite{Borkar2008StochasticAA, zaiwei-envelope, hoang-nonlinear-sa}. 
This choice of step-sizes is crucial in this setting and there are known counter examples that show that almost sure convergence is not possible otherwise (see Appendix \ref{ssec: impossibility-theorem-proof}).
The goal of this paper is to explore the interplay between the noise moments and choice of step-sizes in order to obtain almost sure convergence. Specifically, our goal is to characterize the set of step sizes that work for a given noise moment conditions, going beyond the classical square integrability assumption. 


In particular, many applications in Machine Learning and Applied Probability have heavy-tailed noise \cite{infinite-variance-sgd-mert, tail-index-simsekli19a, Harchol-Balter_2013_book, eytan-heavy-tailed, ziv-heavy-tailed-jobs}. 
While the noise in such settings has infinite second moment, it is possible that the $p$-th moment for $1<p<2$ is finite. We show that one can still obtain almost sure convergence in Stochastic Approximation in this setting by allowing a smaller choice of step sizes than the classical setting. 

On the other hand, in many applications, one has much stronger handle on the noise beyond the second moment, i.e., one may know that the $p$-th moment of the noise exists for some $p > 2$. In that case, we show that a much larger class of step sizes than the classical setting works. In particular, we show that one can use larger steps, which are preferred in practice because they lead to faster convergence in the beginning \cite{Goodfellow2016DeepLearning,yu-constant-stepsize, zaiwei-constant-stepsize}. Thus, our result provides a more fine-grained trade-off between noise and step-sizes allowing one to use it for wider algorithmic applications.

We now state our main contributions where we made these choice of step sizes more precise. 

\subsection{Contributions}
\label{ssec: contribution}

In this work, we consider the Stochastic Approximation algorithm of a  "nice enough" operator $H$. We characterize the niceness of the operator in terms of a general drift condition (see Assumption \ref{assumption: general-drift-condition}), and our setting encompasses many popular Stochastic Approximation settings such as Hurwitz linear operators or contractive operators. 
We consider a general unbiased noise sequence that is either i.i.d. or a martingale difference sequence, and additionally, it can be either additive or multiplicative. Moreover, we assume that the $p$-th moment is finite for some $p>1$. Then, we show that one has almost sure asymptotic convergence to the fixed point $x^\star$ of $H$ as long as the 
diminishing step size sequence $(\alpha_k)_{k \geq 0}$ satisfies the condition, 
\begin{align}
    \label{eqn: step-size-condition}
    \sum \alpha_k = \infty, \qquad \sum \alpha_k^p < \infty.
\end{align}
In particular, if one uses step sizes of the form $\alpha_k=\alpha (k+K)^{-\xi}$, then the above condition imposes that $\xi \in (1/p,1]$. We show that this condition is tight in the following sense. We present a counterexample where picking step sizes with $\xi\in (0,1/p]$ does not lead to almost sure convergence. When one has $p=2$, one recovers the classical results. Another special case of our result is the Strong Law of Large Numbers (SLLN) using step sizes $\alpha_k = \frac{1}{k+1}$, i.e. $\xi = 1$. Thus, our result provides a generalization of SLLN where one is allowed to use more general step sizes depending on the moment condition of the random variables.
We also complement our theoretical results with an experiment piecewise linear systems that illustrates our theoretical results. 

We obtain our results by considering two $1 < p \leq 2$ and $p > 2$ as two separate cases. First, when $1 < p \leq 2$, we directly apply the Almost Supermartingale Convergence Theorem \cite{robbins1971convergence,neurodynamic} using a Lyapunov/potential function of the form $\norm{x - x^\star}^p$. This potential function enables us to generalize the classical proof of $p=2$ in a straightforward manner. Then, we consider the case of $p > 2$, which is more challenging. It is natural to again use the Lyapunov function, $\norm{x - x^\star}^p$. However, this setting is more challenging, and so, one is unable to obtain an almost supermartingale. So, we adopt an upcrossing argument as follows. For any $D>0$, we show that  $\norm{x - x^\star}>D$ only finitely many times, which then immediately implies convergence of $\norm{x - x^\star} \to 0$ since $D$ is arbitrary. We establish this fact by again using the Almost Supermartingale Convergence Theorem with the Lyapunov function being the $p$-th power of the projection of $\norm{x - x^\star}$ onto the interval $[D, 2D]$. In contrast to previous works on the $p > 2$ regime \cite{mertikopoulos2020almost, jin2025stochastic}, our projection technique allows us to handle multiplicative noise and lift the stringent assumptions posed in these works where the knowledge of the $2p-2$-th moment is required.

We believe that our approach is versatile and enables one to further generalize the results. As an illustration, we consider Stochastic Approximation of nonexpansive operators (which do not satisfy our Lyapunov drift condition). In the case where $1 < p \leq 2$, using the same proof technique, we recover the above mentioned result on almost sure convergence using step sizes of the form \eqref{eqn: step-size-condition}.

\subsection{Literature overview}
\label{ssec: literature}
\textbf{Stochastic Approximation}: The Stochastic Approximation algorithm was first proposed by \cite{Robbins&Monro:1951} and since then, researchers have found many applications in Reinforcement Learning \cite{bhandari2018finite,qu-wierman-async,sajad-two-time-scale-tac,sajad-federated-rl, zaiwei-triad, chandak2025finite,durmus2025finite,Chen2025-sa-concentration}, Game Theory \cite{zaiwei-stochastic-game-2023, Mertikopoulos2024-volkan-stochastic-approximation-games}, Control \cite{hanfuchen-adaptive-regulator, HanFuChenSAApplication, hoang-nonlinear-sa} and Statistics \cite{asymptotic-variance-shubhada-agrawal24a}. Thus, theoretically understanding the behavior of the Stochastic Approximation iterates under various noise and step size settings has been a major topic in the ML and OR communities. In particular, many works have explored its behavior under the constant step size setting (i.e. $\xi = 0$) \cite{zhang2024constant,zhang2024prelimit,huo2024collusion-constant-sa,zaiwei-constant-stepsize,yu-constant-stepsize} due to its desirable convergence behavior in practice \cite{Goodfellow2016DeepLearning}. However, it would be impossible for the Stochastic Approximation iterates to converge due to the influence from the noise, and so various works have studied diminishing step sizes (i.e. $\xi \in (0,1]$) \cite{zaiwei-envelope, sajad-two-time-scale-tac, hoang-nonlinear-sa} where finite-time mean squared (m.s.) convergence can be obtained. On the other hand, several other works instead study almost sure (a.s.) convergence \cite{blum-almost-sure,Borkar2008StochasticAA,kushner-yin-book,mertikopoulos2020almost,karandikar2024convergence,liu2024almost,weissmann2024almost,jin2025stochastic}. In particular, several previous works such as \cite{Borkar2008StochasticAA,kushner-yin-book, kontoyiannisborkar2024odemethodasymptoticstatistics, jin2024stabilityconvergenceanalysis-adagrad,jin2025stochastic,caio-meyn2025revisitingstepsizeassumptionsstochastic} establish almost sure convergence via the so-called ODE method by investigating continuous-time dynamics. When the noise is square-integrable, \cite{Borkar2008StochasticAA, kushner-yin-book} show the behavior of the discrete-time system \eqref{equation: main_iterate} is identical to that of the ODE whenever the step sizes are square-summable, in contrast to the fact that m.s. convergence can be obtained for any non-summable diminishing step sizes.

\textbf{Law of Large Numbers}: In the special case that $H \equiv x^\star, \alpha_k = \frac{1}{k+1}$, one can show that proving almost sure convergence for the Stochastic Approximation iterates is equivalent to the SLLN \cite{durrett2019probability}. Under this viewpoint, there have been many proofs using several different approaches, such as using maximal inequality \cite{J_Michael_Steele2015-maximal-lln}, random walk \cite{Curien2022-random-walk-slln-proof}, monotonic convergence from subsequence using second moment assumption \cite{grimmett-stirzaker-book, gallager1996book, bremaud2020markov-book}, Borel-Cantelli on the fourth moment assumption \cite{gallager1996book, bremaud2020markov-book, durrett2019probability}, and truncation \cite{Etemadi1981-slln, gallager1996book, bremaud2020markov-book, durrett2019probability}. However, these approaches either rely on making strong assumptions or computing the next update using all previous iterates, which are hardly applicable for the Nonlinear Stochastic Approximation setting.



\textbf{Noise assumption}: Various works have studied the convergence of the Stochastic Approximation beyond the classical square-integrability assumption of the noise. For the heavy-tailed regime $p \in (1,2)$, \cite{krasulina1969method, goodsell1976almost, li1993almost} studies the scalar Robbins–Monro recursion with additive noise, and to the best of our knowledge, there is no work that obtains almost sure convergence for a general Nonlinear Stochastic Approximation algorithm. Besides this, \cite{infinite-variance-sgd-mert,wang2021convergence, fatkhullin2025can} obtains finite-time convergence for SGD in the infinite variance noise setting. For the regime $p > 2$, \cite{mertikopoulos2020almost} and \cite{jin2025stochastic} studied the almost sure convergence of SGD when the $p$-th moment of the noise is bounded. However, they only considered additive noise, and both papers additionally require the stringent bounded $(2p-2)$-th moment noise assumption. The recent work \cite{caio-meyn2025revisitingstepsizeassumptionsstochastic} presents almost sure convergence 
for all choices of step-sizes of the form $\alpha_k=\alpha/k^\xi$ for $\xi\in(0,1)$. However, their results are under a strong assumption called DV3 condition \cite{kontoyiannis2005largedeviation}, which ensures that the noise has all moments. 
In contrast, our work provides the complete trade-off between noise and step-sizes by showing that when the $p$-th moment of noise is finite, one can use step-sizes that satisfy \eqref{eqn: step-size-condition}.

\section{Problem setting and Main Results}
\label{sec: main-results}

\subsection{Problem setting}

Let $H$ be a (possibly non-linear) operator $H: \mathbb{R}^d \rightarrow \mathbb{R}^d$. Our main interest is to solve the following equation:
\begin{align}
    \label{problem: fixed_point_equation}
    H(x) = x
\end{align}
where we only have access to a noisy oracle. In contrast to classical deterministic FPE solvers that assume access to the exact oracle $H(x)$, real-life applications usually revolve around approximations of the form $H(x) + w$, where $w$ represents the inexactness. Thus, to efficiently solve~\eqref{problem: fixed_point_equation}, the stochastic iterative algorithm~\eqref{equation: main_iterate} is usually employed with a suitable step size $\alpha_k$. The choice of step size is crucial to ensure the robustness and theoretical advantages of the algorithm. In this paper, we focus on analyzing SA with a decreasing step size $\alpha_k$, which is usually chosen as $\alpha_k = \frac{\alpha}{(k+K)^{\xi}}$, where $\xi \in (0,1]$, and determine sufficient and necessary conditions to establish almost sure convergence. Thus, we make the following assumptions that are motivated by applications in Reinforcement Learning and Control~\cite{nonlinear-sa, zaiwei-envelope, inverse-rl-passive-langevin, hoang-nonlinear-sa}. The first assumption requires the solution set to be nonempty and bounded, and the second assumption imposes a general negative-drift Lyapunov condition that is satisfied for an appropriate potential function:


\begin{assumption}[Negative drift condition]
\label{assumption: general-drift-condition}
Let $\Phi : \mathbb{R}^d \to \mathbb{R}_+$ be a differentiable Lyapunov function such that there exists constants $\eta, c_1, c_2, L_2 > 0$ such that 
\begin{align}
&\big\langle \nabla \Phi(x - x^\star), H(x) - x \big\rangle
\le - \eta \, \Phi(x - x^\star),
\qquad \forall x \in \mathbb{R}^d,
\label{eqn: negative-drift}
\\[0.6em]
\Phi(y)
&\le \Phi(x)
+ \big\langle \nabla \Phi(x), y - x \big\rangle
+ \frac{L_2}{2} \, \| y - x \|_2^{2},
\qquad \forall x,y \in \mathbb{R}^d,
\label{eqn: holder-smoothness}
\\[0.6em]
&c_1 \, \|x-x^\star\|_2^{2}
\le \Phi(x-x^\star)
\le c_2 \, \|x-x^\star\|_2^{2},
\qquad \forall x \in \mathbb{R}^d,
\label{eqn: lyapunov-sandwich} 
\end{align}
\end{assumption}

This general negative drift condition was previously studied in \cite{general-purpose-shaan}, and it encompasses many Stochastic Approximation settings such as Hurwitz linear operators \cite{srikant-ying-td-learning, haque2025tightfinitetimebounds}, contractive operators \cite{zaiwei-envelope}, dissipativity \cite{nonlinear-sa}, and exponential stability \cite{hoang-nonlinear-sa}. In addition to Stochastic Approximation, this negative drift condition also appears in other applications in Markov chain mixing \cite{hoang-erlang-c-mixing, taghvaei-lyapunov-poincare} or Learning \cite{raginsky2017nonconvexsgld, inverse-rl-passive-langevin}. Typically, the quadratic function $\norm{x-x^\star}^2$ is a common candidate for $\Phi$. Additionally, we can also use $\Phi = f-f^\star$ when the objective function $f$ is behaving like a quadratic function or the Moreau envelope when the objective is non-smooth \cite{zaiwei-envelope, hoang-nonlinear-sa}.



Furthermore, we assume the noise to be a martingale difference sequence, and has zero mean. Motivated by finite-time bounds results in \cite{zaiwei-envelope, Chen2025-sa-concentration}, we impose the $p-$th conditional moment of the noise to be either bounded by a constant or have an affine growth with respect to the distance $\|x_k - x^\star\|$. To the best of our knowledge, we are the first to study this setting for the general moment condition and the nonlinear operator, where previous studies either impose additional higher moment information (locally) or a finite constant upper bound that doesn't reflect the usage of SA in applications.

\begin{assumption}[Noise unbiasedness]
\label{assumption: noise-unbiasedness}
Let $\{w_k\}_{k \ge 0}$ be a noise sequence adapted to the filtration
\begin{align}
\mathcal{F}_k := \sigma(x_k, x_{k-1}, \dots, x_0).
\end{align}
Assume that for all $k \ge 0$,
\begin{align}
\mathbb{E}\!\left[w_k \mid \mathcal{F}_k\right] = 0 .
\end{align}
\end{assumption}

\begingroup
\renewcommand{\theassumption}{\arabic{assumption}-p}
\begin{assumption}[$p$-th moment noise growth condition]
\label{assumption: noise-condition-p}
Let $\{w_k\}_{k \ge 0}$ be the noise sequence.
There exists constants $A_p > 0$ and $B_p \ge 0$ for some $p > 1$
such that, for all $k \ge 0$,
\begin{align}
\mathbb{E}\!\left[\|w_k\|^p \mid \mathcal{F}_k\right]
\le A_p + B_p \|x_k - x^\star\|^p .
\end{align}
\end{assumption}
\endgroup


   

When \(B_p > 0\), this setting is called \emph{multiplicative noise} and our moment condition generalizes the classical finite-variance assumption (\(p = 2\)) commonly adopted in the literature \cite{lam-nguyen-hogwild, nonlinear-sa, zaiwei-envelope, sajad-federated-rl, thinh-nonlinear-two-time-sa, hoang-nonlinear-sa, Chen2025-sa-concentration, general-purpose-shaan}. Last but not least, we assume that the operator $H$ is Lipschitz:

\begin{assumption}
    \label{assumption: Lipschitz}
    There exists a constant $C > 0$ such that for any $x,y \in \mathbb{R}^D$:

    \begin{align}
        \|H(x) - H(y)\| \leq C\|x - y\|
    \end{align}
\end{assumption}

To illustrate the necessity of this condition, consider applying stochastic gradient descent to minimize a convex function \(F(x)\). By defining the update mapping as \(H(x) = x - c \nabla F(x)\), where \(c>0\) is a suitably chosen stepsize, the assumption under consideration reduces to the classical Lipschitz continuity of the gradient \(\nabla F\). This requirement is well known to play a central role in guaranteeing the stability and convergence properties of the stochastic gradient descent algorithm \cite{lam-nguyen-hogwild,nonlinear-sa, hoang-nonlinear-sa, general-purpose-shaan}.


\subsection{Main results}

We present our main theorem as follows:

\begin{theorem}
    \label{theorem: main-result}
    Suppose Assumption  \ref{assumption: general-drift-condition}, \ref{assumption: noise-unbiasedness}, \ref{assumption: noise-condition-p} for some $p > 1$  and \ref{assumption: Lipschitz} hold, then for any step size sequence $\{\alpha_k\}_{k \geq 0}$ of the update \eqref{equation: main_iterate} to be a non-increasing sequence that satisfies:
    \begin{align}
        & \sum \alpha_k = \infty \\
        & \sum \alpha_k^{p} < \infty 
    \end{align}
    then the iteration $x_k$ converges almost surely to the unique solution of \eqref{problem: fixed_point_equation}.
\end{theorem}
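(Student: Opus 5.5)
The proof splits into the regimes $1<p\le 2$ and $p>2$, following the outline in Section~\ref{ssec: contribution}. In both regimes the target is an almost supermartingale inequality of Robbins--Siegmund type,
\begin{align*}
\mathbb{E}[V_{k+1}\mid\mathcal{F}_k]\le (1+a_k)V_k - b_k + c_k, \qquad \sum a_k<\infty,\ \sum c_k<\infty,
\end{align*}
with $b_k\ge 0$. The Almost Supermartingale Convergence Theorem then gives that $V_k$ converges and $\sum b_k<\infty$ almost surely. Uniqueness of $x^\star$ is immediate: \eqref{eqn: negative-drift} applied at a second fixed point $y$ gives $0\le-\eta\Phi(y-x^\star)$, so $y=x^\star$ by \eqref{eqn: lyapunov-sandwich}.

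\textbf{Case $1<p\le 2$.} Take $V_k=\Phi(x_k-x^\star)^{p/2}$, which is equivalent to $\|x_k-x^\star\|^p$. Write $x_{k+1}-x^\star=(x_k-x^\star)+\alpha_k(H(x_k)-x_k)+\alpha_k w_k$. I would prove a Taylor-like bound for $\psi=\Phi^{p/2}$: the map $s\mapsto s^{p/2}$ is concave, and combined with \eqref{eqn: holder-smoothness} this should give
\begin{align*}
\psi(y)\le\psi(x)+\langle\nabla\psi(x),y-x\rangle+C_p\|y-x\|^p.
\end{align*}
This is the $p$-H\"older-smoothness of a $p$-homogeneous function, obtained by splitting according to whether $\|y-x\|\le\|x\|$ or not. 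Taking conditional expectations, the linear noise term vanishes by Assumption~\ref{assumption: noise-unbiasedness}. The drift term equals $\tfrac p2\Phi^{p/2-1}\langle\nabla\Phi,H(x_k)-x_k\rangle\le-\tfrac{p\eta}{2}\alpha_kV_k$. The remainder is at most $C\alpha_k^p(\|H(x_k)-x_k\|^p+\mathbb{E}\|w_k\|^p)$, which by Assumptions~\ref{assumption: Lipschitz} and \ref{assumption: noise-condition-p} is at most $C'\alpha_k^p(1+V_k)$. This gives the supermartingale form with $a_k=c_k=C'\alpha_k^p$ (summable) and $b_k=\tfrac{p\eta}{2}\alpha_kV_k$. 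Hence $V_k\to V_\infty$ and $\sum\alpha_kV_k<\infty$. Since $\sum\alpha_k=\infty$, we get $\liminf V_k=0$, so $V_\infty=0$.

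\textbf{Case $p>2$.} Here $\Phi^{p/2}$ has a second-order expansion whose remainder contains cross terms of the form $\Phi^{p/2-1}\alpha_k^2\|w_k\|^2$. These require either $\sum\alpha_k^2<\infty$ or higher mixed moments, so the direct argument fails. I would instead show that, for each fixed $D>0$, almost surely $\|x_k-x^\star\|\le 2D$ eventually; since $D$ is arbitrary, this gives convergence. Fix $D$ and let $r_k=\Phi(x_k-x^\star)^{1/2}$. Use $V_k=(\pi(r_k)-D)^p$, where $\pi$ is the projection onto $[D,2D]$; more precisely, use a smoothed version so that the first two derivatives are bounded.

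The key point is that $V_k$ is nonconstant only when $r_k\in[D,2D]$, a bounded region away from $0$. On this region $\Phi^{1/2}$ is smooth with bounded derivatives, and the multiplicative noise is bounded: $\mathbb{E}[\|w_k\|^p\mid\mathcal{F}_k]\le A_p+B_p(2D)^p$. Large jumps, with $\alpha_k\|w_k\|\gtrsim D$, are handled crudely. Because $V_k\le D^p$ always, their contribution is at most $D^p\,\mathbb{P}(\alpha_k\|w_k\|>cD\mid\mathcal{F}_k)\lesssim\alpha_k^p$ by Markov's inequality, and these terms are summable. Small jumps are handled by a Taylor expansion. The drift is negative whenever the truncated quantity is active. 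The second-order term $\alpha_k^2\|w_k\|^2\mathbf{1}\{\alpha_k\|w_k\|\le cD\}$ is bounded by $\alpha_k^p\|w_k\|^p(cD)^{2-p}$, which is again summable.

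This yields $\mathbb{E}[V_{k+1}\mid\mathcal{F}_k]\le V_k-b_k+c_k$ with $\sum c_k<\infty$. The main obstacle is the transition from outside the active region: an iterate with $r_k<D$ can jump into the region. Such a jump requires $\alpha_k(\|H(x_k)-x_k\|+\|w_k\|)$ to be of order $D$ when started from $r_k$ well below $D$, so it is again a large-jump event of probability $O(\alpha_k^p)$. To make this work I expect to set the lower threshold of the projection at a level such as $D/2$ rather than $D$.

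Once the inequality is established, $V_k$ converges almost surely. The crossings also need to be ruled out. The first-case argument, which needs only $p$-th moments, applied with $\|x\|^{q}$ for $q\in(1,2]$, gives $\sum\alpha_k\min(r_k,1)^q<\infty$ and hence $\liminf r_k=0$ almost surely. Therefore $V_k=0$ infinitely often, so $V_\infty=0$. It follows that $r_k\le D$ eventually, for every $D>0$, which completes the proof.
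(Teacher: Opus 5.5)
Your $1<p\le 2$ case is the paper's argument. For $p>2$, your architecture is also the paper's: a $p$-th power of $r_k=\Phi(x_k-x^\star)^{1/2}$ projected onto $[D,2D]$, then Robbins--Siegmund, then excluding a positive limit. But the $p>2$ case breaks exactly in the only new regime, $\sum\alpha_k^2=\infty$; if $\sum\alpha_k^2<\infty$, the classical $p=2$ argument already applies.

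\textbf{First gap: the small-jump second-order term.} Set $t=\alpha_k\|w_k\|$. Your inequality $t^2\mathbf{1}\{t\le cD\}\le t^p(cD)^{2-p}$ is reversed for $p>2$, because on $\{t\le cD\}$ one has $t^p(cD)^{2-p}=t^2(t/cD)^{p-2}\le t^2$. That truncation trick is a $p<2$ device. So your $c_k$ is of order $\alpha_k^2\,\mathbb{E}[\|w_k\|^2\mid\mathcal{F}_k]$, which is not summable (e.g.\ bounded-variance noise with $\xi\in(1/p,1/2]$). The missing idea is that this term cannot be bounded on its own. It has to be absorbed by the negative drift, and this is what the exponent $p$ together with the unsmoothed window buys:
\begin{itemize}
\item With $z_k=(\min(2D,r_k)-D)_+$, the second-order term carries the factor $z_k^{p-2}$.
\item The first-order term contributes $-c\,\alpha_k z_k^{p-1}$, using $\mathbb{E}[r_{k+1}-r_k\mid\mathcal{F}_k]\le -c\alpha_k$ on $\{r_k\ge D\}$, which follows from the $\Phi$-drift by Jensen.
\item Young's inequality (equivalently, the paper's dichotomy $z_k\gtrless b\alpha_k$) then gives $C\alpha_k^2z_k^{p-2}\le c\,\alpha_kz_k^{p-1}+C'\alpha_k^p$.
\end{itemize}
The paper controls the intermediate point with $|x+\Delta|^p\le|x|^p+p|x|^{p-1}\mathrm{sgn}(x)\Delta+2p^2|x|^{p-2}\Delta^2+p^p|\Delta|^p$, applied at $x=z_k$, $\Delta=\min(2D,r_{k+1})-r_k$, together with the a.s.\ bound $|r_{k+1}-r_k|\le b_3\alpha_k(r_k+\|w_k\|)$. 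A generic smoothing with merely bounded second derivative would destroy this matching. If the second derivative does not vanish like $(r-D)^{p-2}$ at the lower edge, an $O(\alpha_k^2)$ term remains that cannot be absorbed. Separately, your worry about entries from below (and the shift to $D/2$) is unnecessary: if $r_k<D$ then $z_{k+1}\le|r_{k+1}-r_k|$, whose conditional $p$-th moment is already $O(\alpha_k^p)$.

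\textbf{Second gap: the step $\liminf r_k=0$.} Running your case-one argument with $\Phi^{q/2}$, $q\in(1,2]$, leaves a remainder of order $\alpha_k^q$. In this regime $\sum\alpha_k^q=\infty$ for every $q\le 2$, so Robbins--Siegmund yields nothing. The paper instead picks $q\ge 2$ with $\sum\alpha_k^{q-1}=\infty$ and $\sum\alpha_k^q<\infty$, which is possible since $\sum\alpha_k=\infty$ and $\sum\alpha_k^p<\infty$. It then uses the weighted potential $\alpha_k^{q-2}\Phi(x_k-x^\star)$. Monotonicity of $\alpha_k$ gives $\mathbb{E}[\alpha_{k+1}^{q-2}r_{k+1}^2\mid\mathcal{F}_k]\le\alpha_k^{q-2}r_k^2-b_1\alpha_k^{q-1}r_k^2+b_2\alpha_k^q$. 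Hence $\sum\alpha_k^{q-1}r_k^2<\infty$, and so $\liminf r_k=0$. Alternatively, a stopping-time argument would show $r_k<D$ infinitely often, which is all that is needed to force the limit of $z_k$ to be $0$. That argument stops $r_k$ at its first entrance below $D$ and uses the drift $-c\alpha_k$ on $\{r_k\ge D\}$ together with $\sum\alpha_k=\infty$.
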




Our Theorem \ref{theorem: main-result} generalizes many previous results in the literature while using more relaxed assumptions. In particular, our results hold for multiplicative noise (i.e. $B_p > 0$ in Assumption \ref{assumption: noise-condition-p}), whereas \cite{mertikopoulos2020almost, jin2025stochastic} can only obtain a.s. convergence for additive noise (i.e. $B_p = 0$). In fact, handling multiplicative noise for $p > 2$ is an extremely non-trivial endeavour, and we will discuss this in-depth in our proof outline in Section \ref{sec: proof-outlines} and our detailed proof in Section \ref{sec: proof-details}. Additionally, we highlight that our result does not depend on additional moment assumptions, i.e., bounded local $2p-2$-th moment, which is crucial in previous work \cite{mertikopoulos2020almost,jin2025stochastic}. This allows for a wide applicability of our convergence result. Moreover, Theorem \ref{theorem: main-result} can also be viewed as an asymptotic version of the concentration bounds in \cite{Chen2025-sa-concentration}, but for a general $p > 1$.


When the step size $\alpha_k = \alpha(k+K)^{-\xi}$ is chosen, we have the following corollary.

\begin{corollary}
    \label{corollary: valid-xi-range}
    Suppose Assumption  \ref{assumption: general-drift-condition}, \ref{assumption: noise-unbiasedness}, \ref{assumption: noise-condition-p} with $p > 1$ and \ref{assumption: Lipschitz} hold, and the step sizes has the form $\alpha_n=\alpha(n+K)^{-\xi}$ with $\xi\in\left(\frac{1}{p},1\right]$ and for some $\alpha, K > 0$, we have the iteration $x_k$ converges almost surely to the solution of \eqref{problem: fixed_point_equation}.
\end{corollary}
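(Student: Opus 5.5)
The corollary is a direct specialization of Theorem~\ref{theorem: main-result}. The plan is simply to check that $\alpha_n=\alpha(n+K)^{-\xi}$ with $\xi\in(1/p,1]$ and $\alpha,K>0$ meets each hypothesis on the step-size sequence, and then invoke the theorem. The assumptions on $H$ and on the noise (Assumptions~\ref{assumption: general-drift-condition}, \ref{assumption: noise-unbiasedness}, \ref{assumption: noise-condition-p}, \ref{assumption: Lipschitz}) are identical in both statements, so nothing needs to be verified there.

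The checks are as follows.

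\emph{Monotonicity.} Since $\xi>0$ and $K>0$, the map $n\mapsto (n+K)^{-\xi}$ is strictly decreasing on $n\ge 0$. Hence $\{\alpha_n\}$ is non-increasing, and positive.

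\emph{Non-summability.} Since $\xi\le 1$, we have $(n+K)^{-\xi}\ge (n+K)^{-1}$ whenever $n+K\ge 1$. This holds for all but finitely many $n$. The harmonic-type series $\sum_n (n+K)^{-1}$ diverges by comparison with $\int_0^\infty (t+K)^{-1}\,dt=\infty$. Therefore $\sum_n\alpha_n=\infty$.

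\emph{$p$-th power summability.} We have $\alpha_n^p=\alpha^p(n+K)^{-p\xi}$, and $p\xi>1$ exactly because $\xi>1/p$. The integral test, or comparison with $\int_0^\infty (t+K)^{-p\xi}\,dt=K^{1-p\xi}/(p\xi-1)<\infty$, gives $\sum_n\alpha_n^p<\infty$.

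With these three properties, Theorem~\ref{theorem: main-result} applies verbatim and yields $x_k\to x^\star$ almost surely. The solution is unique: the drift condition \eqref{eqn: negative-drift} together with the sandwich bound \eqref{eqn: lyapunov-sandwich} forces any fixed point $\bar x$ to satisfy $0\le -\eta\Phi(\bar x-x^\star)$, hence $\bar x=x^\star$. This is the unique solution already asserted in the theorem.

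There is no genuine obstacle. The only point requiring care is the endpoints of the range of $\xi$. The lower bound must be strict, since $\xi=1/p$ gives $\sum (n+K)^{-1}=\infty$. The upper endpoint $\xi=1$ is allowed, since it still gives divergence of $\sum\alpha_n$. All of the substantive work lives in the proof of Theorem~\ref{theorem: main-result}.
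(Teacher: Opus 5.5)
Your proposal is correct and follows the same route as the paper: check that $\alpha_n=\alpha(n+K)^{-\xi}$ with $\xi\in(1/p,1]$ is non-increasing, satisfies $\sum\alpha_n=\infty$, and satisfies $\sum\alpha_n^p<\infty$ (using $p\xi>1$), and then invoke Theorem~\ref{theorem: main-result}. Your version simply spells out the monotonicity and non-summability checks, which the paper's one-line proof leaves implicit.
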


\begin{proof}
    The proof follows from the fact that the necessary condition for $\alpha_k$ is $\sum_k \alpha_k^p < \infty$, which requires $\xi > \frac{1}{p}$, applying Theorem \ref{theorem: main-result} yields the claim.
\end{proof}

In contrast to m.s. convergence where finite-time convergence can be obtained for any diminishing non-summable step sizes (which corresponds to any $\xi > 0$ for $\alpha_k = \alpha(k+K)^{-\xi}$), Corollary \ref{corollary: valid-xi-range} suggests that a.s. convergence is only possible for $\xi > \frac{1}{p}$. In addition, Theorem \ref{theorem: main-result} and Corollary \ref{corollary: valid-xi-range} can be considered as generalizations of the almost sure convergence result in \cite{zaiwei-envelope, hoang-nonlinear-sa, neurodynamic, mohri2018foundationsmachinelearning-book} (for $p = 2$) and a generalization of SLLN (when $p \rightarrow 1^+$) \cite{durrett2019probability, gallager1996book}. We refer the readers to Subsection \ref{ssec: example-slln} for a formal discussion of this observation.

Now, if Assumption \ref{assumption: noise-condition-p} holds for any value of $p > 0$, which is the case for sub-Weibullian or bounded a.s. noise sequences, then we can indeed obtain a.s. convergence for any diminishing non-summable step sizes, as stated in the following corollary.

\begin{corollary}
    \label{corollary: p-infty}
    Suppose Assumption  \ref{assumption: general-drift-condition}, \ref{assumption: noise-unbiasedness}, \ref{assumption: noise-condition-p} and \ref{assumption: Lipschitz} hold for all $p > 1$, and if the step sizes has the form $\alpha_n=\alpha(n+K)^{-\xi}$ with $\xi\in(0,1]$ and for some $K > 0$ then the iteration $x_k$ converges almost surely to the solution of \eqref{problem: fixed_point_equation}.
\end{corollary}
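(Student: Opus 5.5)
The plan is to reduce Corollary \ref{corollary: p-infty} directly to Theorem \ref{theorem: main-result}. The corollary assumes that Assumption \ref{assumption: noise-condition-p} holds for every $p>1$. So for each fixed step-size exponent $\xi\in(0,1]$ we only need to choose one value $p$ for which the step sizes satisfy the hypotheses of the theorem.

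Fix $\xi\in(0,1]$ and choose any $p>\max\{1,1/\xi\}$, for instance $p=1+2/\xi$. Then $\xi p>1$, and the step size $\alpha_k=\alpha(k+K)^{-\xi}$ has the following properties.

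\begin{itemize}
\item It is non-increasing, because $K>0$ and $\xi>0$.
\item It is non-summable: $\xi\le 1$, so $\sum_k (k+K)^{-\xi}$ diverges by comparison with the harmonic series.
\item It is $p$-th power summable: $\sum_k \alpha_k^p=\alpha^p\sum_k (k+K)^{-\xi p}<\infty$, since $\xi p>1$ (a $p$-series comparison with $K>0$).
\end{itemize}

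By hypothesis, Assumption \ref{assumption: noise-condition-p} holds for this chosen $p$, with some constants $A_p,B_p$. The remaining Assumptions \ref{assumption: general-drift-condition}, \ref{assumption: noise-unbiasedness} and \ref{assumption: Lipschitz} do not depend on $p$. All hypotheses of Theorem \ref{theorem: main-result} are therefore met, and it gives $x_k\to x^\star$ almost surely. Equivalently, we can apply Corollary \ref{corollary: valid-xi-range} with this $p$, since $\xi\in(1/p,1]$.

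There is essentially no obstacle beyond noting the order of quantifiers. We choose $p$ after $\xi$ is fixed, and this is legitimate because the moment condition is assumed for all $p>1$. The constants $A_p,B_p$ may blow up as $p$ grows, but only a single finite $p$ is ever used, so this does no harm.
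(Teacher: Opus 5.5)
Your proposal is correct and is essentially the paper's own argument: you fix $\xi$, pick a single $p>1/\xi$ (which the ``for all $p>1$'' hypothesis permits), check that $\sum\alpha_k=\infty$ and $\sum\alpha_k^p<\infty$, and invoke Theorem \ref{theorem: main-result}. The only cosmetic difference is that your choice $p>\max\{1,1/\xi\}$ covers $\xi=1$ together with $\xi\in(0,1)$, whereas the paper treats $\xi=1$ as a separate case.
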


\begin{proof}
    Consider any fixed $\xi \in (0,1)$ and choose any $p > \frac{1}{\xi} > 1$, we have $\xi \in \left(\frac{1}{p}, 1 \right]$ and note that we have Assumption \ref{assumption: noise-condition-p} holds for this choice of $p$. By Theorem~\ref{theorem: main-result}, we have that the iteration converges almost surely. For $\xi = 1$, choosing any $p > 1$ is enough to reach the same conclusion.
\end{proof}

Previously, \cite{caio-meyn2025revisitingstepsizeassumptionsstochastic} also provides an almost sure convergence guarantee for all $\xi \in (0,1]$ and Markovian noise sequences under the strong DV3 condition. 
The DV3 condition ensures that all the moments of the noise are finite. In contrast, our Corollary \ref{corollary: p-infty} here presents a precise trade-off between the choice of step-sizes and the noise moments, by showing that when the $p$-the moment is finite, one can use $\xi \in (1/p,1]$.

Now, we will show that having $\xi > \frac{1}{p}$ is also a necessary condition to obtain almost sure convergence. Indeed, we have the following impossibility theorem.

\begin{theorem}
\label{thm:tight}
Consider the step sizes $\alpha_n=\alpha(n+K)^{-\xi}$ with $\xi\in(0,1]$ and for some $K > 0$, there exists a noise process $(w_n)_{n\geq 0}$ satisfying Assumption $\ref{assumption: noise-unbiasedness}$, Assumption $\ref{assumption: noise-condition-p}$ for some $p \geq 1$ and a contraction $T$ such that if $\xi\le 1/p$, the stochastic approximation iterates \eqref{equation: main_iterate} fail to converge almost surely.
\end{theorem}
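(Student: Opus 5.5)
We will build the counterexample in dimension one with the simplest contraction, $T(x) = 0$ (so $x^\star = 0$), which makes \eqref{equation: main_iterate} read
\[
x_{k+1} = (1-\alpha_k)x_k + \alpha_k w_k .
\]
The key observation is that whenever $|\alpha_k w_k|$ is large, say $|\alpha_k w_k| \ge 1 + |(1-\alpha_k)x_k|$, we get $|x_{k+1}| \ge 1$. Hence it suffices to build an independent, mean-zero noise sequence $(w_k)$ with
\[
\sup_k \mathbb{E}|w_k|^p < \infty
\]
such that, almost surely, $|\alpha_k w_k| \ge 2 + |x_k|$ infinitely often. Then $x_k \not\to 0$ almost surely. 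This also suffices for failure of convergence to any limit, since $x^\star = 0$ is the only candidate. (Fix $K$ large enough that $\alpha_k \le 1$ for all $k$.)

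\textbf{Noise construction.} Fix $\xi \le 1/p$ and take independent symmetric noise
\[
w_k = \pm M_k \ \text{each with probability } q_k/2, \qquad w_k = 0 \ \text{otherwise},
\]
where $M_k = c_k/\alpha_k$ and $q_k = \alpha_k^p$. This noise is symmetric and independent of $\mathcal{F}_k$, so Assumption \ref{assumption: noise-unbiasedness} holds. Moreover
\[
\mathbb{E}|w_k|^p = q_k M_k^p = c_k^p ,
\]
so we need $\sup_k c_k < \infty$, which yields Assumption \ref{assumption: noise-condition-p} with $B_p = 0$. Since $\xi p \le 1$, we have $\sum_k q_k = \alpha^p \sum_k (k+K)^{-\xi p} = \infty$. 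By the second Borel--Cantelli lemma (the events are independent), $w_k \neq 0$ infinitely often almost surely, and on those events $|\alpha_k w_k| = c_k$.

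\textbf{Main obstacle.} With bounded $c_k$, a jump of size $c_k$ need not push $|x_{k+1}|$ above a fixed level if $|x_k|$ is itself large, since the jump could land back near zero. I would handle this in one of two ways.

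\emph{Option 1: choose the sign adversarially.} Let the sign of the jump be $\operatorname{sign}(x_k)$, or uniform when $x_k = 0$. We cannot simply condition on $\mathcal{F}_k$ if $w_k$ must be mean-zero, so a sign depending on $x_k$ would bias the noise. Instead, keep the sign symmetric and independent, and note that for either sign,
\[
\max\bigl(|(1-\alpha_k)x_k + c|,\ |(1-\alpha_k)x_k - c|\bigr) \ge c .
\]
So with conditional probability $q_k/2$, the jump yields $|x_{k+1}| \ge c := \inf_k c_k > 0$. Apply the conditional (Lévy) Borel--Cantelli lemma to the events $E_k = \{|x_{k+1}| \ge c\}$. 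We have
\[
\mathbb{P}(E_k \mid \mathcal{F}_k) \ge q_k/2, \qquad \sum_k q_k/2 = \infty,
\]
so $E_k$ occurs infinitely often almost surely. Hence $\limsup_k |x_k| \ge c > 0$ almost surely, and $x_k$ does not converge to $x^\star$.

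\emph{Option 2: the linear structure.} Alternatively, $x_k$ is an explicit weighted sum of independent jumps. Either way, the conditional Borel--Cantelli step is the crux and resolves the difficulty cleanly.

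\textbf{Conclusion.} The construction takes $c_k \equiv 1$ and $T \equiv 0$, a contraction with modulus $0$; a nonzero modulus, e.g.\ $T(x) = x/2$, works identically. I would finally remark that the boundary case $p = 1$ is covered as well, since $\xi \le 1$ always gives $\sum_k \alpha_k^p = \infty$.
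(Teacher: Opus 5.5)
Your Option 1 is essentially the paper's proof. Both use $T\equiv 0$ and independent three-point symmetric noise whose jump size scales like $1/\alpha_k$ and whose probability scales like $\alpha_k^p$, so the $p$-th moment stays constant while $\sum_k q_k=\infty$ exactly when $\xi\le 1/p$. Both then observe that one of the two signs always produces a large value and apply L\'evy's conditional Borel--Cantelli lemma. The only difference is the event tracked: the paper uses $|x_{n+1}-x_n|\ge 4$ in place of your $|x_{k+1}|\ge c$, which directly rules out convergence to any limit.

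To get that stronger conclusion, replace your ``$x^\star=0$ is the only candidate'' remark with a one-line argument. On a convergent path, $\alpha_k w_k = x_{k+1}-(1-\alpha_k)x_k\to 0$, which contradicts $|\alpha_k w_k|=1$ infinitely often. Separately, rather than enlarging $K$, take $q_k=\min(1,\alpha_k^p)$ (or $c\,(k+K)^{-\xi p}$) so that the probabilities are valid for the given $\alpha$ and $K$.
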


We refer the readers to Appendix \ref{ssec: impossibility-theorem-proof} for the proof of this theorem and Section \ref{sec: applications} for an empirical validation of this result where the iterate indeed can diverge for some choice of noise sequence. Now, to see how Theorem \ref{theorem: main-result} is used, we shall provide concrete demonstrations of our results for contractive operators, linear operators, and gradient operators, which are common settings of Stochastic Approximation \cite{lam-nguyen-hogwild, srikant-ying-td-learning, nonlinear-sa, zaiwei-envelope, zaiwei-constant-stepsize, hoang-nonlinear-sa}, in the following subsections. 


\subsection{Example: Contractive operators}
\label{ssec: example-contractive}
Let $w \in \R_{+}^d$, we define $\norm{x}_w = \sqrt{\sum_i w_i x_i^2}$ as a weighted $w$-norm. From here, we call an operator $H: \R^d \rightarrow \R^d$ contractive w.r.t to $w$-norm if there exists $\gamma \in [0,1)$ such that
\begin{align}
    \norm{H(x)-H(y)}_w \leq \gamma \norm{x-y}_w \, \forall x,y \in \R^d.
\end{align}
It is well-known by Banach's Fixed Point Theorem that a contractive operator admits a unique fixed point $x^\star$, and one can obtain the said fixed point $x^\star$ by repeatedly applying the operator \cite{Banach1922}. Contractive operators can be found in many applications in Reinforcement Learning \cite{espeholt2018impala, zaiwei-envelope, zhang-average-nips2021}. To show that our results also apply to contractive operators, we shall show that Assumption \ref{assumption: general-drift-condition} also applies to contractive operators. Indeed, when the operator is contractive with constant $\gamma \in [0,1)$, one can show that the negative drift in Assumption \ref{assumption: general-drift-condition} holds with constant $1-\gamma^2$, thus we can prove the following corollary:

\begin{corollary}
    \label{corollary: contractive-operators}
    Suppose Assumption \ref{assumption: noise-condition-p} holds, $H$ is a contractive operator w.r.t. to some weighted Euclidean norm $\norm{\cdot}_w$ and choosing the step size $\alpha_k = \alpha (k+K)^{-\xi}$, we have the iteration $x_k$ converges almost surely to the solution of \eqref{problem: fixed_point_equation} if and only if $\xi \in \left(\frac{1}{p}, 1\right]$ for any $p > 1$.
\end{corollary}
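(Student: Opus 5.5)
The proof splits into two directions: sufficiency comes from Corollary \ref{corollary: valid-xi-range} once Assumption \ref{assumption: general-drift-condition} is verified for contractions, and necessity comes from Theorem \ref{thm:tight}.

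\emph{Sufficiency.} I take $\Phi(x) = \norm{x}_w^2 = \sum_i w_i x_i^2$, assuming all $w_i > 0$ so that $\norm{\cdot}_w$ is a genuine norm. Then $c_1 = \min_i w_i$, $c_2 = \max_i w_i$ give \eqref{eqn: lyapunov-sandwich}. Since $\Phi$ is a quadratic with Hessian $2\,\mathrm{diag}(w)$, \eqref{eqn: holder-smoothness} holds with $L_2 = 2\max_i w_i$. For the drift \eqref{eqn: negative-drift}, write $e = x - x^\star$ and use $H(x^\star) = x^\star$ together with $\nabla\Phi(e) = 2 W e$, where $W = \mathrm{diag}(w)$:
\begin{align*}
\langle \nabla\Phi(e), H(x) - x\rangle = 2\langle e, H(x) - H(x^\star)\rangle_w - 2\norm{e}_w^2 \le 2\gamma\norm{e}_w^2 - 2\norm{e}_w^2 = -2(1-\gamma)\Phi(e),
\end{align*}
using Cauchy--Schwarz in the $w$-inner product and contractivity. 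So $\eta = 2(1-\gamma)$ works. (The paper's constant $1-\gamma^2$ comes from the identity $2\langle a,b\rangle \le \norm{a}^2 + \norm{b}^2$ instead; either suffices.)

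Assumption \ref{assumption: Lipschitz} follows from norm equivalence: $\norm{H(x)-H(y)}_2 \le \gamma\sqrt{\max_i w_i / \min_i w_i}\,\norm{x-y}_2$. Banach's theorem gives the unique fixed point $x^\star$. Assumption \ref{assumption: noise-unbiasedness} is implicit in the setting, so I will state it as standing. With all assumptions in place, Corollary \ref{corollary: valid-xi-range} gives almost sure convergence whenever $\xi \in (1/p, 1]$.

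\emph{Necessity.} The ``only if'' direction cannot be literally true for every noise sequence: zero noise converges for every $\xi$. So I read it as worst case over noise satisfying the assumptions, which is exactly the form of Theorem \ref{thm:tight}. That theorem provides, for $\xi \le 1/p$, a noise process satisfying Assumptions \ref{assumption: noise-unbiasedness} and \ref{assumption: noise-condition-p} and a contraction for which the iterates fail to converge. I need that contraction to be contractive in some weighted Euclidean norm. The natural construction in Appendix \ref{ssec: impossibility-theorem-proof} is scalar, e.g.\ $H \equiv 0$ or $H(x) = \gamma x$, and in one dimension every weighted norm is a multiple of $|\cdot|$, so this holds.

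\emph{Obstacle.} The main delicacy is this quantifier interpretation of ``if and only if'' and checking that the counterexample fits the class. For the case $\xi > 1$, convergence already fails deterministically: $\sum \alpha_k < \infty$ means the iterates move a bounded total distance, so starting far from $x^\star$ they cannot reach it. The $\xi > 1$ range is therefore excluded for a trivial reason that I will also note.
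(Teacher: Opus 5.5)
Your sufficiency argument is the paper's: take $\Phi=\|\cdot\|_w^2$, verify the sandwich, smoothness and drift conditions, and invoke the main theorem. Your Cauchy--Schwarz step gives $\eta=2(1-\gamma)$, which is never worse than the paper's $1-\gamma^2$ (the difference is $(1-\gamma)^2$). Pairing $\nabla\Phi(e)=2We$ with the Euclidean inner product is also the consistent way to write what the paper expresses as $\langle\nabla\Phi,\cdot\rangle_w$. The paper's proof only establishes the ``if'' direction. Your worst-case reading of ``only if'', settled for $\xi\le 1/p$ by Theorem~\ref{thm:tight} with the scalar map $T\equiv 0$, supplies the intended complement.

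One correction to your $\xi>1$ remark: ``starting far from $x^\star$'' is not the mechanism. With zero noise the total displacement is at most a constant multiple of $\|x_0-x^\star\|$, because the drift $H(x)-x$ grows linearly. So the bounded displacement does not shrink relative to the initial distance, and starting far buys nothing. Use instead $H(x)=\gamma x$ in one dimension with zero noise, where $x_k=x_0\prod_{j<k}\bigl(1-\alpha_j(1-\gamma)\bigr)$. Because $\sum_j\alpha_j<\infty$, this product has a nonzero limit as long as no factor vanishes. You can guarantee that by choosing $\gamma$, since only finitely many $\alpha_j$ are $\ge 1$. Equivalently, apply your displacement bound only from a late time $k_0$ at which $\sum_{k\ge k_0}\alpha_k$ is small and $x_{k_0}\neq x^\star$.
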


\begin{proof}
    Consider $\Phi(x) = \|x\|^2$ (or equivalently,  $\Phi(x - x^{\star}) = \|x - x^{\star}\|^2$), we can show that this function satisfies Assumption \ref{assumption: general-drift-condition}. Indeed, Equation \eqref{eqn: holder-smoothness} is proved in \cite{rodomanov2020smoothness}, and let $W = \mathrm{diag}(w_1,\dots,w_d) \succ 0$ and define the product $\langle a, b \rangle_w = a^TWb$, we have $\norm{x}_w = \langle a, a\rangle_W$:
    \begin{align}
        &\|H(x) - x^{\star} \|_w^2 \leq \gamma^2 \|x - x^{\star}\|_w^2 \\
        & \|H(x) - x\|_w^2 + 2 \langle H(x) - x, x - x^{\star} \rangle_w \leq (\gamma^2 - 1)\|x - x^{\star}\|_w^2 \\
        \Longrightarrow & 2 \langle H(x) - x, x - x^{\star} \rangle_w \leq - (1 - \gamma^2)\|x - x^{\star}\|_w^2. \\
        \Longrightarrow & \langle \nabla \Phi(x - x^{\star}), H(x) - x \rangle_w \leq - (1-\gamma^2)\Phi(x - x^{\star}).
    \end{align}

    The last inequality is true since:

    \begin{align}
2 \langle H(x) - x,\, x - x^{\star} \rangle_w
&= \langle \nabla \Phi(x - x^{\star}),\, H(x) - x \rangle_w.
\end{align}

    Finally, we also have Equation \eqref{eqn: lyapunov-sandwich} is satisfied with $c_1 = \lambda_{\min} (W), c_2 = \lambda_{\max} (W)$. Hence, Equation \eqref{eqn: negative-drift} is satisfied with constant $1 - \gamma^2$, it is easy to see that the other conditions are also satisfied naturally by construction. Thus, applying Theorem \ref{theorem: main-result} yields the result. 
\end{proof}


On the other hand, in many problems in Reinforcement Learning and Control, the operator $H$ can be a contractive operator with respect to a norm other than Euclidean, example are the $\ell_{\infty}-$norm or weighted Euclidean norm. To this end, suppose that $\|H(x) - H(y)\|_{c} \leq \gamma\|x - y\|_c$ where $\| \cdot\|_c$ is arbitrary norm, we have the following corollary:

\begin{corollary}
    Suppose Assumption \ref{assumption: noise-condition-p} holds, $H$ is a contractive operator with respect to $\| \cdot \|_c$ and choosing the step size $\alpha_k = \alpha (k+K)^{-\xi}$, we have the iteration $x_k$ converges almost surely to the solution of \eqref{problem: fixed_point_equation} if and only if $\xi \in \left(\frac{1}{p}, 1\right]$ for any $p > 1$.
\end{corollary}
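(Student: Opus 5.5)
To prove this corollary, I would verify that Assumption \ref{assumption: general-drift-condition} holds with a suitable smooth Lyapunov function $\Phi$ built from $\|\cdot\|_c$. I would then invoke Theorem \ref{theorem: main-result} for the ``if'' direction and Theorem \ref{thm:tight} for the ``only if'' direction, just as in Corollary \ref{corollary: contractive-operators}. The obstacle is that $\frac12\|x\|_c^2$ need not be differentiable or $L_2$-smooth (e.g.\ for $\|\cdot\|_\infty$). So I would use the generalized Moreau envelope of \cite{zaiwei-envelope}:
\begin{align}
    M(x) = \min_{u \in \mathbb{R}^d} \Big\{ \tfrac12 \|u\|_c^2 + \tfrac{1}{2\mu}\|x-u\|_s^2 \Big\},
\end{align}
where $\|\cdot\|_s$ is a smoothing norm such that $\frac12\|\cdot\|_s^2$ is $L$-smooth, e.g.\ $\|\cdot\|_2$ or an $\ell_q$ norm with large $q$. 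Then I would set $\Phi(x) = M(x)$.

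The key steps, in order, are as follows. First, $M$ is convex and differentiable, and it is $\frac{L}{\mu}$-smooth with respect to $\|\cdot\|_s$. By norm equivalence, it is $L_2$-smooth with respect to $\|\cdot\|_2$, which gives \eqref{eqn: holder-smoothness}. Second, writing $\ell_{cs}\|x\|_s \le \|x\|_c \le u_{cs}\|x\|_s$, the envelope is sandwiched as
\begin{align}
    (1+\mu\,\ell_{cs}^2)\,M(x) \le \tfrac12\|x\|_c^2 \le (1+\mu\,u_{cs}^2)\,M(x).
\end{align}
Combining this with equivalence of $\|\cdot\|_c$ and $\|\cdot\|_2$ gives \eqref{eqn: lyapunov-sandwich}. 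Third, for the drift condition \eqref{eqn: negative-drift}, I would use convexity of $M$, so that $\langle \nabla M(x-x^\star), H(x)-x\rangle \le M(H(x)-x^\star) - M(x-x^\star)$. Since $H(x^\star)=x^\star$, contraction gives
\begin{align}
    M(H(x)-x^\star) \le \tfrac{1}{2(1+\mu\ell_{cs}^2)}\|H(x)-x^\star\|_c^2 \le \tfrac{\gamma^2}{2(1+\mu\ell_{cs}^2)}\|x-x^\star\|_c^2 \le \gamma^2\,\tfrac{1+\mu u_{cs}^2}{1+\mu\ell_{cs}^2}\, M(x-x^\star).
\end{align}
Choosing $\mu$ small enough that $\gamma^2(1+\mu u_{cs}^2)/(1+\mu\ell_{cs}^2) < 1$ yields $\eta = 1 - \gamma^2\frac{1+\mu u_{cs}^2}{1+\mu\ell_{cs}^2} > 0$. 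Fourth, Assumption \ref{assumption: Lipschitz} holds because $\|H(x)-H(y)\|_2 \le C_{\text{eq}}\,\gamma\|x-y\|_2$ by norm equivalence. Uniqueness of $x^\star$ follows from Banach's theorem. Assumption \ref{assumption: noise-unbiasedness} is implicit in the setting, as in Corollary \ref{corollary: contractive-operators}.

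With these facts, Theorem \ref{theorem: main-result} applies whenever $\sum\alpha_k=\infty$ and $\sum\alpha_k^p<\infty$, i.e.\ for $\xi\in(1/p,1]$, by Corollary \ref{corollary: valid-xi-range}. For $\xi>1$, the step sizes are summable. Then, for instance with zero noise and $H\equiv x^\star$, the iterate moves a total distance at most $\sum\alpha_k\|x_k-x^\star\| <\|x_0-x^\star\|$, so it cannot converge. For $\xi\le 1/p$, the construction of Theorem \ref{thm:tight} supplies a contraction, which is Euclidean and hence contractive in $\|\cdot\|_c$ after a rescaling if needed, together with a noise sequence for which convergence fails. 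This converse is interpreted, as in the previous corollary, as the existence of a bad instance.

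The main obstacle I expect is the third step: getting the drift constant strictly positive, which requires tuning $\mu$ against the norm-equivalence constants. A secondary obstacle is the converse direction. There one must make sure the counterexample of Theorem \ref{thm:tight} can be realized as a $\|\cdot\|_c$-contraction. I would try a scalar (or coordinatewise identical) linear map $H(x)=\gamma x$, which is a contraction in every norm.
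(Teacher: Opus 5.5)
Your proposal is correct and follows the paper's route. The paper likewise takes $\Phi$ to be the generalized Moreau envelope of \cite{zaiwei-envelope} (with Euclidean smoothing norm), asserts without detail that it satisfies Assumption~\ref{assumption: general-drift-condition}, and applies Theorem~\ref{theorem: main-result}. Your sandwich bound, the convexity-based drift with $\mu$ small, and the norm-equivalence steps are exactly the verifications the paper omits. The converse, which the paper leaves implicit, does come from Theorem~\ref{thm:tight}, where $T\equiv 0$ is already a contraction in every norm, so no rescaling is needed.

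One repair is needed for $\xi>1$. Your bound $\sum_k\alpha_k\|x_k-x^\star\|<\|x_0-x^\star\|$ requires $\sum_k\alpha_k<1$. Moreover, with $H\equiv x^\star$ and zero noise the iterate lands exactly on $x^\star$ whenever some $\alpha_k=1$ (e.g.\ $\alpha=4$, $K=1$, $\xi=2$), so that instance can fail to be a counterexample. Instead take $H(x)=x^\star+\gamma(x-x^\star)$, with $\gamma\in[0,1)$ chosen so that $\alpha_k(1-\gamma)\neq 1$ for all $k$; only finitely many values of $\gamma$ are excluded. For $x_0\neq x^\star$ one then has $x_k-x^\star=\prod_{j<k}\bigl(1-\alpha_j(1-\gamma)\bigr)(x_0-x^\star)$, which tends to a nonzero limit because $\sum_j\alpha_j<\infty$.
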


\begin{proof}
    It is well known in the literature \cite{zaiwei-envelope} that choosing the Moreau envelope 
    \begin{align}
        \Phi(x) = M_{f,g}^{\mu}(x) =  \inf_{u \in \R^d} \br{f(u) + \frac{1}{\mu}g(x-u)}
    \end{align}
    where $f(x) = \frac{1}{2} \|x\|_c^2$ and $g(x) = \frac{1}{2}\|x\|^2$ satisfies Assumption \ref{assumption: general-drift-condition}, we omit the detail here for clarity. Thus, applying Theorem \ref{theorem: main-result} yields our claim.
\end{proof}

\subsection{Example: Linear operators}
\label{ssec: example-linear}
In this subsection, we will take a look at the case where $H$ is a linear operator, i.e. $H(x) = (A+I)x + b$ for some linear mapping $A$ and $b$ is some vector, $I$ denotes the identity mapping. In this case, our update step is called the Linear Stochastic Approximation, which has the following form.
\begin{align}
    \label{eqn: linear-update-step}
    x_{k+1} = x_k + \alpha_k\br{ Ax_k + b + w_k} \, \forall k \geq 0
\end{align}
where $w_k$ is the noise. The Linear Stochastic Approximation algorithm has found many applications in Reinforcement Learning \cite{srikant-ying-td-learning, zaiwei-envelope, haque2025tightfinitetimebounds}. In order to stabilize the update step, we require the matrix $A$ to be a Hurwitz matrix, that is, a matrix whose eigenvalues all have negative real parts.
\begin{corollary}
    Suppose Assumption \ref{assumption: noise-condition-p} holds, $A$ is a Hurwitz matrix and choosing the step size $\alpha_k = \alpha (k+K)^{-\xi}$, we have the iteration $x_k$ converges almost surely to the solution of \eqref{eqn: linear-update-step} if and only if $\xi \in \left(\frac{1}{p}, 1\right]$ for any $p > 1$.
\end{corollary}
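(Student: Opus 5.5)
The plan has two directions: sufficiency follows from Theorem \ref{theorem: main-result} once the Hurwitz linear iteration is shown to satisfy its assumptions, and necessity follows from Theorem \ref{thm:tight}.

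\textbf{Sufficiency.} First rewrite \eqref{eqn: linear-update-step} in the form \eqref{equation: main_iterate} with $H(x) = (A+I)x + b$.

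Since $A$ is Hurwitz, it is invertible. Hence the fixed point $x^\star = -A^{-1}b$ exists and is unique, and $H(x) - x = A(x - x^\star)$.

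For the Lyapunov function we use the Lyapunov equation. Because $A$ is Hurwitz, there is a unique $P \succ 0$ with
\begin{align}
A^\top P + P A = -I.
\end{align}
Set $\Phi(y) = y^\top P y$. The three parts of Assumption \ref{assumption: general-drift-condition} then hold as follows.
\begin{itemize}
\item The sandwich condition \eqref{eqn: lyapunov-sandwich} holds with $c_1 = \lambda_{\min}(P)$ and $c_2 = \lambda_{\max}(P)$.
\item The smoothness condition \eqref{eqn: holder-smoothness} holds with equality, with $L_2 = 2\lambda_{\max}(P)$. This is because $\Phi$ is quadratic, so $\Phi(y) - \Phi(x) - \langle \nabla\Phi(x), y-x\rangle = (y-x)^\top P (y-x)$.
\item For the drift condition \eqref{eqn: negative-drift}, write $e = x - x^\star$. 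Then
\begin{align}
\langle \nabla \Phi(e), Ae\rangle = 2e^\top P A e = e^\top (PA + A^\top P) e = -\|e\|^2 \le -\frac{1}{\lambda_{\max}(P)}\Phi(e).
\end{align}
So we may take $\eta = 1/\lambda_{\max}(P)$.
\end{itemize}

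Assumption \ref{assumption: Lipschitz} holds with $C = \|A + I\|$. Noise unbiasedness (Assumption \ref{assumption: noise-unbiasedness}) is taken as part of the standing noise model, alongside Assumption \ref{assumption: noise-condition-p}.

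For $\alpha_k = \alpha(k+K)^{-\xi}$ with $\xi \in (1/p, 1]$, the sequence is non-increasing, $\sum_k \alpha_k = \infty$, and $\sum_k \alpha_k^p < \infty$. Corollary \ref{corollary: valid-xi-range}, equivalently Theorem \ref{theorem: main-result}, then gives $x_k \to x^\star$ almost surely.

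\textbf{Necessity.} Here we must show that $\xi \le 1/p$ can fail. The case $\xi > 1$ is immediate: then $\sum_k \alpha_k < \infty$, so the deterministic part of the iteration cannot reach $x^\star$ from a generic starting point (already true with zero noise).

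For $\xi \le 1/p$ we invoke Theorem \ref{thm:tight}. The interpretation of "if and only if" is that convergence must fail for \emph{some} admissible noise, not for every noise. The main obstacle is that Theorem \ref{thm:tight} supplies only \emph{some} contraction $T$, whereas we need a counterexample within the linear Hurwitz class. I would resolve this in the following order.
\begin{enumerate}
\item Check the appendix construction and expect that $T$ is linear (or affine), for instance the scalar map $T(x) = x/2$, i.e.\ $A = -\tfrac12$, which is Hurwitz.
\item If the construction is instead nonlinear, redo it directly for the linear recursion. Take i.i.d.\ symmetric heavy-tailed scalar noise with $\mathbb{E}|w|^{p} < \infty$ but $\mathbb{P}(|w| > t) \gtrsim t^{-p}$ (with a log factor if needed so that $\sum_n \mathbb{P}(\alpha_n |w_n| > \epsilon) = \infty$ when $\xi \le 1/p$).
\item By the second Borel–Cantelli lemma, the single-step jumps $\alpha_n w_n$ then exceed $\epsilon$ infinitely often almost surely.
\item This contradicts convergence. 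Indeed, $x_{n+1} - x_n = \alpha_n(Ax_n + b) + \alpha_n w_n$, and the first term tends to $0$ on the event that $x_n$ converges.
\end{enumerate}
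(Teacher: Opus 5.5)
Your proposal is correct and follows the paper's route. Sufficiency comes from a quadratic Lyapunov function built from the Lyapunov equation, plus Theorem~\ref{theorem: main-result}. Your orientation $A^\top P + PA = -I$ is the right one; the paper's $AP + P^\top A^\top = -Q$ does not in general yield the identity $\langle (P+P^\top)e, Ae\rangle = -e^\top Q e$ that its computation relies on. Necessity comes from Theorem~\ref{thm:tight}, and your step 1 already succeeds there, because the appendix construction takes $T \equiv 0$, i.e.\ the scalar Hurwitz case $A=-1$, $b=0$.

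Your fallback would not have worked at the boundary $\xi = 1/p$, however. A tail bound $\mathbb{P}(|w|>t) \gtrsim t^{-p}$ is incompatible with $\mathbb{E}|w|^p < \infty$. Moreover, for any i.i.d.\ noise with finite $p$-th moment and $\xi p = 1$, we have $\sum_n \mathbb{P}(\alpha_n |w_n| > \epsilon) = \sum_n \mathbb{P}\bigl(|w|^p > (\epsilon/\alpha)^p (n+K)\bigr) < \infty$, so single large jumps cannot occur infinitely often. This is exactly why the paper uses independent but non-identically distributed noise with $n$-dependent $s_n, q_n$.
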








\begin{proof}
    Since $A$ is Hurwitz, there exists P,Q that are positive definite matrices such that $AP + P^TA^T = - Q$ \cite{khalil-book, zaiwei-constant-stepsize}. Consider $\Phi(x) = x^TPx$, we can check that:
\begin{align}
\bigl\langle \nabla \Phi(x - x^{\star}),\, H(x) - x \bigr\rangle
&= \bigl\langle (P + P^{\mathsf T})(x - x^{\star}),\, Ax + b \bigr\rangle \\
&= \bigl\langle (P + P^{\mathsf T})(x - x^{\star}),\, A(x - x^{\star}) \bigr\rangle \\
&= (x - x^{\star})^{\mathsf T}
    \bigl( AP + P^{\mathsf T} A^{\mathsf T} \bigr)
    (x - x^{\star}) \\
&= -\, (x - x^{\star})^{\mathsf T} Q (x - x^{\star}) \\
&\le -\, \frac{\lambda_{min}(Q)}{\lambda_{max}(P)}\, (x-x^{star})^{\mathsf T} P (x-x^{\star})
\end{align}

where $\lambda$ denotes the corresponding eigenvalue. Furthermore, since it's clear that $\Phi(x)$ is smooth (being a quadratic function) and $\lambda_{min}\|x\|^2\leq \Phi(x) \leq \lambda_{max} \|x\|^2$, Assumption \ref{assumption: general-drift-condition} and \ref{assumption: Lipschitz} are satisfied. Applying Theorem \ref{theorem: main-result} yields our claim.
\end{proof}

\subsection{Example: Stochastic Gradient Descent (SGD)}
\label{ssec: example-sgd}
Finally, another special case of the Stochastic Approximation algorithm is the Stochastic Gradient Descent (SGD) algorithm where we wish to find the optimal solution of some objective function $f$. In this case, we are looking for a point $x^\star$ such that $\nabla f(x^\star) = 0$, which is equivalent to 
\begin{align}
    H(x^\star) = x^\star - \nabla f(x^\star) = x^\star.
\end{align}
By substituting $H(x) = x-\nabla f(x)$, this gives us the following update step:
\begin{align}
    \label{eqn: sgd-update-step}
    x_{k+1} = x_k + \alpha_k\br{-\nabla f(x_k) + w_k}.
\end{align}
For SGD, the analog condition to contractive operators is the Polyak-Lojasiewicz (PL) condition, which is formally defined as:
\begin{assumption}
    \label{assumption: pl-condition}
    A function $f: \cX \rightarrow \R$ is said to satisfy the PL condition if
    \begin{align}
        \frac{\norm{\nabla f(x)}^2}{2} \geq \mu \br{f(x)-f^\star} \, \forall x \in \cX
    \end{align}
    for some $\mu > 0$.
\end{assumption}
Under the PL condition, it is known that one can achieve linear convergence for the Gradient Descent algorithm \cite{karimi-pl-condition-2016}. On the other hand, with the presence of noise, one can only obtain $O\br{\frac{1}{\varepsilon}}$ complexity for an $\varepsilon$-approximation problem. For our setting, we can apply Theorem \ref{theorem: main-result} to obtain the following result.
\begin{corollary}
    Suppose Assumption \ref{assumption: noise-condition-p} holds, $f$ satisfies the PL condition and is a smooth function with constant $L$. Then by choosing the step size $\alpha_k = \alpha (k+K)^{-\xi}$, we have the iteration $x_k$ converges almost surely to the stationary point $x^\star$ of $f$ if and only if $\xi \in \left(\frac{1}{p}, 1\right]$ for any $p > 1$.
\end{corollary}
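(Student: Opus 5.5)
The plan is to reduce the corollary to Theorem \ref{theorem: main-result} for the ``if'' direction and to Theorem \ref{thm:tight} for the ``only if'' direction. This mirrors what was done for contractive and linear operators above. Throughout, set $H(x)=x-\nabla f(x)$, so the update \eqref{eqn: sgd-update-step} is exactly \eqref{equation: main_iterate}. For the fixed point we take $x^\star$ with $\nabla f(x^\star)=0$. Under PL every stationary point is a global minimizer, since $\nabla f(x)=0$ forces $f(x)=f^\star$. As in the paper's other examples, we assume the minimizer $x^\star$ is unique, as required for the ``unique solution'' in Theorem \ref{theorem: main-result}.

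\textbf{Checking the assumptions.} Assumption \ref{assumption: Lipschitz} is immediate. Since $\nabla f$ is $L$-Lipschitz, $\|H(x)-H(y)\|\le (1+L)\|x-y\|$.

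For Assumption \ref{assumption: general-drift-condition}, the natural candidate (suggested in the text after the assumption) is $\Phi(x-x^\star)=f(x)-f^\star$, viewed as a function of $y=x-x^\star$. Its gradient is $\nabla\Phi(x-x^\star)=\nabla f(x)$, which gives the drift bound
\[
\langle \nabla f(x), H(x)-x\rangle=-\|\nabla f(x)\|^2\le -2\mu\,(f(x)-f^\star),
\]
so \eqref{eqn: negative-drift} holds with $\eta=2\mu$. The smoothness inequality \eqref{eqn: holder-smoothness} is the standard descent lemma for $L$-smooth $f$, with $L_2=L$. For the sandwich \eqref{eqn: lyapunov-sandwich}, the upper bound $f(x)-f^\star\le \frac{L}{2}\|x-x^\star\|^2$ follows from smoothness together with $\nabla f(x^\star)=0$. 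The lower bound $f(x)-f^\star\ge \frac{\mu}{2}\|x-x^\star\|^2$ is the quadratic growth property implied by PL (Karimi et al.), valid because the minimizer set is $\{x^\star\}$. Hence $c_1=\mu/2$ and $c_2=L/2$. Assumption \ref{assumption: noise-unbiasedness} is taken as part of the noise model, as in the earlier corollaries.

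\textbf{Concluding.} Once the assumptions are verified, Corollary \ref{corollary: valid-xi-range} gives almost sure convergence for $\xi\in(1/p,1]$. For the converse, Theorem \ref{thm:tight} exhibits a noise process and a contraction for which convergence fails when $\xi\le 1/p$. We want that contraction to be of gradient form, for instance $f(x)=\frac{c}{2}\|x\|^2$, which is PL and smooth and yields the contraction $H(x)=(1-c)x$ with $c\in(0,1)$. If the counterexample in the appendix is scalar linear, which is likely, it is exactly of this form. Finally, $\xi>1$ violates $\sum\alpha_k=\infty$: for $f=\frac{c}{2}x^2$ with zero noise, the iterates stall away from $0$ when started away from $0$.

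\textbf{Main obstacle.} The delicate step is the lower sandwich bound via quadratic growth. Without uniqueness of the minimizer, PL gives only growth in terms of the distance to the solution set, not to a single $x^\star$. I would therefore make uniqueness explicit and cite the PL$\Rightarrow$QG implication. A secondary point is the necessity direction, where I must confirm that the construction in Theorem \ref{thm:tight} can be realized as a gradient field; the quadratic choice above should handle it.
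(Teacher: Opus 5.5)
Your proposal is correct and takes essentially the same route as the paper. Set $\Phi(x-x^\star)=f(x)-f^\star$; PL gives the drift with $\eta=2\mu$, $L$-smoothness gives the smoothness inequality and the upper sandwich bound, and quadratic growth gives the lower sandwich bound. Then check that $H(x)=x-\nabla f(x)$ is Lipschitz and invoke Theorem~\ref{theorem: main-result}.

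Your version is more careful than the paper's proof, which leaves both the uniqueness of $x^\star$ and the necessity direction implicit. For necessity, the appendix counterexample already uses $T\equiv 0$. This is exactly the gradient step $x-\nabla f(x)$ for $f(x)=\tfrac12 x^2$, so you do not need to guess its form.
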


\begin{proof}
    Notice that since $f$ is a smooth function, we have $H(x) = -\nabla f(x) + x$ is a Lipschitz operator \cite{ryu2022large}. Furthermore, the PL condition and smoothness implies the following bound:
    \begin{align}
        \label{eqn: pl-growth-bound}
        \frac{L}{2}\|x - x^{\star}\|^2\geq f(x) - f^{\star} \geq \frac{\mu}{2}\|x - x^{\star}\|^2.
    \end{align}
    Thus, a Lyapunov satisfies Assumption \ref{assumption: general-drift-condition} is $\Phi(x - x^{\star}) = f(x) - f^{\star}$, since the negative drift inequality \eqref{eqn: negative-drift} follows from PL condition as:
    \begin{align}
        \langle \nabla \Phi(x- x^{\star}),H(x) - x \rangle = - \langle  \nabla f(x), \nabla f(x) \rangle \leq -2\mu (f(x) - f^{\star}) = -2\mu \Phi(x - x^{\star})
    \end{align}
    and the growth condition \eqref{eqn: lyapunov-sandwich} follows from \eqref{eqn: pl-growth-bound}. Thus, applying Theorem \ref{theorem: main-result} yields our claim.
\end{proof}
For most of the SGD convergence guarantees in the literature, they are typically established for square-integrable noises, which is equivalent to the case $p = 2$. As it turns out, many applications in ML exhibit heavy-tailed behaviors \cite{infinite-variance-sgd-mert, tail-index-simsekli19a}, i.e., infinite variance noise. Previously, it is shown that by considering a simple one-dimensional quadratic optimization problem with heavy-tailed noise, SGD iterations can diverge in expectation. Thus, this leads researchers to design variants of SGD that can provably converge in expectation using additional mechanisms such as gradient clipping or normalization. A recent study \cite{fatkhullin2025can} takes a deeper look where the author shows that under the bounded-domain setting, a vanilla average of SGD with proper step-size tuning can converge in expectation with the optimal rate albeit they can only handle additive noise. Nonetheless, they also show by construction that any output of SGD without additional modification results in divergence in expectation in the unbounded-domain setting.
Our work offers an alternative perspective on the convergence of the Stochastic Approximation algorithm by showing that even for unbounded domain setting, multiplicative noise and non-expansive operator, the Stochastic Approximation iteration and its special case SGD can still converge almost surely the solution given an appropriate choice of step size.

\subsection{Example: Strong Law of Large Numbers (SLLN) with step-sizes}
\label{ssec: example-slln}
The SLLN is a classic result in Probability theory, which states that the long-run average of Independent and Identically distributed random samples converge to its expected value when it exists. More formally, let $Z_0, Z_1, \dots$ be an infinite sequence of independent and identically distributed (i.i.d.) random variables with a finite expected value $E[Z_i] = \mu$. The sample average is defined as:
\begin{align}
    \label{eqn: sample-average}
    X_n = \frac{1}{n} \sum_{i=0}^{n-1} Z_i
\end{align}
SLLN states that the sample average converges almost surely to $\mu$, that is $X_n \overset{a.s.}{\rightarrow} \mu$. Now, note that \eqref{eqn: sample-average} can be written in an iterative manner starting from $X_{1} = Z_0$ and for $n\geq 1$ as follows
\begin{align}
    X_{n+1} = X_n + \frac{1}{n+1} \br{Z_{n}-X_n},
\end{align}
which can be viewed as a special form of the Stochastic Approximation update \eqref{equation: main_iterate} for $H \equiv \mu$, $w_n=Z_n-\mu$ as the unbiased noise term, so that $H(x_n)+w_n=Z_{n}$, step size $\alpha_n = \frac{1}{n+1}$.  Here, the limit point $\mu$ can be viewed as the unique fixed point of $H$ and the step size corresponds to the case $\xi = 1$. A natural question is if one can generalize SLLN to the setting of other choices of stepsizes. More precisely, the question is if  the recursion defined by
\begin{align}
    \label{eqn: iterative-slln}
    X_{n+1} = X_n + \alpha_n \br{Z_{n}-X_n},
\end{align}
also converges almost surely to the mean $\mu$ for other choices of step-sizes $\alpha_n$ beyond the case of classical SLLN where $\alpha_n=\frac{1}{n+1}$. Indeed, this immediately follows from our theorem, which we now state as a corollary.

\begin{corollary}
    \label{corollary: weighted-slln}
    Let $Z_0, Z_1, \dots$ be an infinite sequence of i.i.d. scalar random variables with a finite expected value $E[Z_0] = \mu$ and $\E[|Z_0|^p] < \infty$ for some $p > 1$. Then, the recursion \eqref{eqn: iterative-slln} with initialization $X_{1} = Z_0$ converges almost surely to $\mu$ as long as the step-size condition \eqref{eqn: step-size-condition} is met. In particular, this holds for the step sizes $\alpha_k = \alpha(k+K)^{-\xi}$ for some $\alpha, K > 0$ and $\xi \in (p^{-1},1]$.
    
\end{corollary}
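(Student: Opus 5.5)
We obtain Corollary \ref{corollary: weighted-slln} by applying Theorem \ref{theorem: main-result} in dimension $d=1$ with the constant operator $H(x)\equiv\mu$, the noise $w_n := Z_n-\mu$, and the Lyapunov function $\Phi(x)=x^2$. The recursion \eqref{eqn: iterative-slln} is then exactly \eqref{equation: main_iterate}, because $X_n+\alpha_n(H(X_n)-X_n+w_n)=X_n+\alpha_n(Z_n-X_n)$. The unique fixed point of $H$ is $x^\star=\mu$. Since the recursion starts at $n=1$ from $X_1=Z_0$, we re-index $x_k:=X_{k+1}$ and $w_k:=Z_{k+1}-\mu$. The shifted step sizes $\alpha_{k+1}$ are still non-increasing, non-summable and $p$-th power summable.

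We then check the assumptions one at a time.

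\textbf{Assumption \ref{assumption: general-drift-condition}.} We have
\[
\langle \nabla\Phi(x-\mu),H(x)-x\rangle = 2(x-\mu)(\mu-x) = -2\Phi(x-\mu),
\]
so the drift condition holds with $\eta=2$. Smoothness holds with equality for $L_2=2$, and the sandwich condition holds with $c_1=c_2=1$.

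\textbf{Assumption \ref{assumption: Lipschitz}.} It holds trivially, because $H$ is constant.

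\textbf{Assumption \ref{assumption: noise-condition-p}.} Let $\mathcal F_k=\sigma(x_0,\dots,x_k)$. Each $x_k$ is a measurable function of $Z_0,\dots,Z_k$, so $\mathcal F_k\subseteq\sigma(Z_0,\dots,Z_k)$, and $w_k$ depends only on $Z_{k+1}$, which is independent of $\sigma(Z_0,\dots,Z_k)$. Hence $\mathbb E[|w_k|^p\mid\mathcal F_k]=\mathbb E|Z_0-\mu|^p\le 2^{p-1}(\mathbb E|Z_0|^p+|\mu|^p)=:A_p<\infty$. So the assumption holds with $B_p=0$, i.e.\ the noise is purely additive.

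\textbf{Assumption \ref{assumption: noise-unbiasedness}.} The same independence gives $\mathbb E[w_k\mid\mathcal F_k]=\mathbb E[Z_{k+1}]-\mu=0$. The one delicate point is that the paper's filtration $\mathcal F_k$ does not contain $w_k$, so $w_k$ is not literally $\mathcal F_k$-adapted. Theorem \ref{theorem: main-result} only uses conditional unbiasedness and the conditional moment bound of $w_k$ given the past, and independence of $Z_{k+1}$ from the past supplies both. To be safe, one can instead run the theorem with the larger filtration $\mathcal G_k=\sigma(Z_0,\dots,Z_k)$: $x_k$ is $\mathcal G_k$-measurable, $w_k$ is $\mathcal G_{k+1}$-measurable, and the conditional identities above still hold.

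With all assumptions verified, Theorem \ref{theorem: main-result} yields $X_n\to\mu$ almost surely whenever \eqref{eqn: step-size-condition} holds. For the polynomial step sizes, we need $\alpha_k=\alpha(k+K)^{-\xi}$ with $\xi\le 1$ to be non-increasing and non-summable, and $\sum_k(k+K)^{-\xi p}<\infty$ exactly when $\xi p>1$. This is the argument of Corollary \ref{corollary: valid-xi-range}, and it gives the range $\xi\in(p^{-1},1]$.

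The main obstacle is not analytic but bookkeeping: aligning the index shift from $X_1=Z_0$ with the step sizes, and matching the filtration or adaptedness convention so that the conditional moment and unbiasedness conditions apply verbatim. Everything else is a direct instantiation of the theorem.
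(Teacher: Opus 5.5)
Your proposal is correct and is essentially the paper's argument. The paper also casts the recursion as SA with $H\equiv\mu$ and additive noise $w_n=Z_n-\mu$, with the $p$-th moment bounded by $2^{p-1}(\mathbb{E}|Z_0|^p+|\mu|^p)$; it only differs in routing through Corollary~\ref{corollary: contractive-operators} ($H$ is a contraction with $\gamma=0$), whose proof is the same quadratic-Lyapunov check of Assumption~\ref{assumption: general-drift-condition} that you do directly with $\Phi(x)=x^2$. Your care with the index shift and the filtration $\sigma(Z_0,\dots,Z_k)$ fills in bookkeeping the paper leaves implicit, and both arguments tacitly assume non-increasing step sizes, as Theorem~\ref{theorem: main-result} requires.
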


\begin{proof}
    Clearly, the recursion \eqref{eqn: iterative-slln} is a special case of SA \eqref{equation: main_iterate} where $H \equiv \mu$, $w_n=Z_n-\mu$ as the unbiased noise term, so that $H(x_n)+w_n=Z_{n}$. Note that $H$ is a constant operator, and so, it is a contractive operator with parameter $\gamma = 0$.
    Moreover, the noise term $w_n=Z_n-\mu$ 
    is indeed unbiased and has a finite $p$-th moment since by the Holder's inequality, we have
    \begin{align}
        \E\sqbr{|Z_n-\mu|^p} \leq \E\sqbr{(|Z_n|+|\mu|)^p} \leq 2^{p-1} (\E[|Z_n|^p]+\mu^p) < \infty.
    \end{align}
    Thus, $X_n$ converges almost surely to $\mu$ from Corollary \ref{corollary: contractive-operators}.
\end{proof}
Indeed, this corollary for the case of $p=2$ can be found in Theorem 14.5 of \cite{mohri2018foundationsmachinelearning-book}. Thus, we generalize it to the setting of $p>1$. 

Note that unfortunately, we cannot obtain the classic SLLN as a special case when $p=1$. This is because it corresponds to the case of $\xi=1/p$. This case, sits at a knife-edge, and the convergence behavior of SA when $\xi=1/p$ is complex. In the proof of Theorem \ref{thm:tight}, we present a counter example where the SA update \eqref{equation: main_iterate} does not converge almost surely when the noise sequence is a particular martingale difference sequence. However, in SLLN, the noise is i.i.d. and so, the counterexample does not preclude convergence in the i.i.d. noise case. Thus, characterizing conditions for convergence in the case of $\xi=1/p$ is an interesting future direction. 
A different generalization of SLLN in terms of weights was investigated in \cite{fazekas2017note}, and it is not directly related to the SLLN with steps-sizes that we present here.

\subsection{Generalization to non-expansive operators}

While Assumption \ref{assumption: general-drift-condition} is covered in many applications, some important class of problem such as convex optimization with SGD \cite{boyd-primer-operator} or average reward Q learning \cite{abounadi2002stochastic,he2022emphatic} in general doesn't admit such negative drift. Thus, beyond this assumption,we can further generalize our results to operators with milder conditions, such as non-expansive operators. We call an operator $H: \R^d \rightarrow \R^d$ non-expansive with repsect to Euclidean norm $\|\cdot \|_2$ if
\begin{align}
    \norm{H(x)-H(x)}_2 \leq \norm{x-y}_2 \, \forall x,y \in \R^d.
\end{align}
In the SGD special case, one can show that if the objective function $f$ is convex and smooth with paraemter $L$, then the operator $H(x) = x-\eta\nabla f(x)$ is indeed non-expansive \cite{boyd-primer-operator} for $\eta \in (0, \frac{2}{L})$. For non-expansive operators, we do not necessarily have a unique fixed point $x^\star$ (if any even exists). Hence, we have the following assumption.
\begin{assumption}[Bounded solution]
    \label{assumption: bounded-solution}
    There exists a nonempty and bounded set $\mathcal{X}$ that contains all the solutions for Equation \eqref{problem: fixed_point_equation}.
\end{assumption}
Since we do not have uniqueness of $x^\star$, the approach of taking $x_{n+1} = H(x_n)$ as in Banach's Fixed Point Theorem no longer works (for instance, consider $H$ as the reflection operator w.r.t. to some plane). Nevertheless, by introducing diminishing step sizes, one can show that the iterates can still converge to one of the fixed points of $H$ \cite{boyd-primer-operator,zaiwei-envelope}. Previously, finite-time and almost sure convergence guarantees for non-expansive operators for $p = 2$ were established in \cite{zaiwei-envelope, bravo-cominetti-non-expansive}. Here, we obtain the following extension for $p \in (1,2)$ for non-expansive operators.

\begin{theorem}
    \label{theorem: p_smaller_2_nonexpansive_multiplicative}
    Under Assumption \ref{assumption: noise-condition-p} for $p \in (1,2]$ and Assumption \ref{assumption: bounded-solution}, and assume that $H$ is a non-expansive operator with respect to Euclidean norm, for any step size sequence $\alpha_k$ that is decreasing and satisfies: 
    \begin{align}
        & \sum \alpha_k = \infty \\
        & \sum \alpha_k^{p} < \infty 
    \end{align}
    then the iteration $x_k$ converges almost surely to a solution.
\end{theorem}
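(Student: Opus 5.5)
We fix an arbitrary fixed point $x^\star\in\mathcal{X}$ and work with the Lyapunov function $V_k=\|x_k-x^\star\|^p$, $p\in(1,2]$. The aim is an almost supermartingale inequality of Robbins--Siegmund type, $\mathbb{E}[V_{k+1}\mid\mathcal{F}_k]\le (1+c\alpha_k^p)V_k + c\alpha_k^p - \alpha_k U_k$ with $U_k\ge 0$. The Almost Supermartingale Convergence Theorem then gives two things: $\|x_k-x^\star\|$ converges a.s. for each fixed $x^\star$, and $\sum_k\alpha_k U_k<\infty$ a.s. To finish, we pass from these to convergence of the iterates to a single fixed point by an Opial-type argument.

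\textbf{Drift bound.} Write $x_{k+1}-x^\star = y_k + \alpha_k w_k$, where $y_k = (1-\alpha_k)(x_k-x^\star)+\alpha_k(H(x_k)-x^\star)$ is the noiseless step. For $p\in(1,2]$ the function $\|\cdot\|^p$ has a $(p-1)$-Hölder continuous gradient, so
\[
\|y+z\|^p\le \|y\|^p+\langle \nabla\|y\|^p,z\rangle + c_p\|z\|^p .
\]
Apply this with $y=y_k$, $z=\alpha_kw_k$. Since $y_k$ is $\mathcal{F}_k$-measurable, Assumption \ref{assumption: noise-unbiasedness} kills the linear term. Assumption \ref{assumption: noise-condition-p} bounds the remainder by $c_p\alpha_k^p(A_p+B_p\|x_k-x^\star\|^p)$.

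For the deterministic part we use non-expansiveness together with $H(x^\star)=x^\star$. The Euclidean identity for convex combinations gives
\[
\|y_k\|^2 = (1-\alpha_k)\|x_k-x^\star\|^2+\alpha_k\|H(x_k)-x^\star\|^2-\alpha_k(1-\alpha_k)\|H(x_k)-x_k\|^2 \le \|x_k-x^\star\|^2-\alpha_k(1-\alpha_k)\|H(x_k)-x_k\|^2 .
\]
Raising to the power $p/2\le 1$ preserves this. Concavity of $t\mapsto t^{p/2}$ then yields a decrease term of the form
\[
U_k \gtrsim \|x_k-x^\star\|^{p-2}\|H(x_k)-x_k\|^2 ,
\]
and this decrease is nonnegative even before we know boundedness. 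Combining the two parts gives the almost supermartingale inequality with $a_k=c_pB_p\alpha_k^p$ and $b_k=c_pA_p\alpha_k^p$, both summable by $\sum_k\alpha_k^p<\infty$.

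\textbf{Passing to a fixed point.} We conclude that $\|x_k-x^\star\|$ converges a.s. and that $\sum_k \alpha_k\|x_k-x^\star\|^{p-2}\|H(x_k)-x_k\|^2<\infty$.
\begin{itemize}
\item The iterates are bounded, so the weight $\|x_k-x^\star\|^{p-2}$ is bounded below by a positive constant. Otherwise $x_k\to x^\star$ along a subsequence, and since the limit of $\|x_k-x^\star\|$ exists, $x_k\to x^\star$ outright, which already finishes the proof.
\item Since $\sum_k\alpha_k=\infty$, we get $\liminf_k\|H(x_k)-x_k\|=0$.
\item To upgrade this to a full limit, we show that $\|H(x_k)-x_k\|$ has small increments. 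The increment is at most $2\|x_{k+1}-x_k\|\le 2\alpha_k(\|H(x_k)-x_k\|+\|w_k\|)$, and $\sum_k \alpha_k w_k$ converges a.s. as a martingale bounded in $L^p$ (by Burkholder's inequality and $\sum_k\alpha_k^p<\infty$). A standard upcrossing argument between levels $\varepsilon$ and $2\varepsilon$ then works: each upcrossing costs a definite amount of $\sum_k\alpha_k$ spent while $\|H(x_k)-x_k\|\ge\varepsilon$, which is incompatible with the summability above. Hence $\|H(x_k)-x_k\|\to 0$.
\item Every cluster point of $(x_k)$ is therefore a fixed point, by continuity of $H$.
\end{itemize}

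\textbf{Main obstacle.} The step needing care is making ``$\|x_k-x^\star\|$ converges for every $x^\star$'' hold simultaneously on one probability-one event. We take a countable dense subset of the (closed, bounded) fixed-point set $\mathcal{X}$, intersect the corresponding events, and extend to all of $\mathcal{X}$ by the triangle inequality. Opial's argument then finishes: if two cluster points $z_1\neq z_2$ existed, both in $\mathcal{X}$, the existence of $\lim_k\|x_k-z_i\|$ for $i=1,2$ forces $z_1=z_2$. Hence $x_k$ converges a.s. to a solution.
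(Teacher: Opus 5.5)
Your argument works in outline, but it takes a genuinely different route from the paper.

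\textbf{How the two proofs differ.} The paper applies the H\"older-smoothness bound \eqref{equation: sub_quadratic_drift} around $x_k-x^\star$ with the full increment $\alpha_k(H(x_k)-x_k+w_k)$. It controls the cross term through $\langle H(x_k)-x_k,\,x_k-x^\star\rangle\le-\tfrac12\|H(x_k)-x_k\|^2$ and bounds the remainder crudely. It then takes the infimum over fixed points, so its Lyapunov function is effectively $\mathrm{dist}(x_k,\mathcal{X})^p$. After the supermartingale theorem, it concludes by compactness: $\|H(x)-x\|$ is bounded away from zero on $\{x:\varepsilon\le \mathrm{dist}(x,\mathcal{X})^p\le M\}$.

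You instead perturb only the noiseless step $y_k$ by $\alpha_k w_k$. You handle the deterministic part exactly, via the convex-combination identity and concavity of $t^{p/2}$. You keep each fixed point separate and finish with a Fej\'er/Opial argument on a common probability-one event.

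The paper's route avoids the countable-dense-subset bookkeeping and never needs $\|H(x_k)-x_k\|\to 0$. However, it only proves $\mathrm{dist}(x_k,\mathcal{X})\to 0$. Yours gives convergence to a single fixed point, which is what the theorem actually asserts.

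\textbf{Two repairs.} First, your upcrossing step does not close as written. Summing the per-step bound $2\alpha_k(\|H(x_k)-x_k\|+\|w_k\|)$ over an upcrossing produces $\sum_k\alpha_k\|w_k\|$. The a.s.\ convergence of the vector series $\sum_k\alpha_k w_k$ does not control that sum. What you would need instead is
$g_n-g_m\le 2\|x_n-x_m\|\le 2\sum_{j=m}^{n-1}\alpha_j g_j+2\bigl\|\sum_{j=m}^{n-1}\alpha_j w_j\bigr\|$, with $g_j=\|H(x_j)-x_j\|$.

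More simply, the step is unnecessary. From $\liminf_k\|H(x_k)-x_k\|=0$ and boundedness you get a subsequence $x_{k_j}\to\bar x$, and $H(\bar x)=\bar x$ by continuity. Since $\lim_k\|x_k-\bar x\|$ exists on your simultaneous event, that limit must be $0$.

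Second, the ``otherwise'' clause in your first bullet is backwards. For $p<2$ the weight $t^{p-2}$ blows up as $t\to0$. So boundedness of the iterates alone gives $\|x_k-x^\star\|^{p-2}\ge M^{p-2}>0$, with no case split needed.
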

We defer the proof for Theorem \ref{theorem: p_smaller_2_nonexpansive_multiplicative} to Appendix \ref{sec: non-expansive-proof-heavy-tailed-regime}. In this work, we can only obtain a guarantee for $p \in (1,2]$, and the case $p > 2$ is a future research direction.

\section{Proof outline}
\label{sec: proof-outlines}


Previously, the finite-time behavior of the discrete-time Stochastic Approximation method was commonly analyzed using the celebrated Lyapunov drift method \cite{srikant-ying-td-learning, nonlinear-sa, zaiwei-envelope, sajad-two-time-scale-tac, thinh-nonlinear-two-time-sa, hoang-nonlinear-sa}. In particular, these prior works aim to construct a suitable potential function (also known as a Lyapunov function) so that they can obtain a tight one-step drift, from which they can obtain tight finite-time bounds under an appropriate choice of the step sizes. The strength of the Lyapunov drift method lies in its versatility, and its ability to handle nonlinear operators without the need to expanding the iterates. For our case, the so-called Almost Supermartingale Convergence Theorem \cite{robbins1971convergence,neurodynamic} is the bread-and-butter of the Lyapunov drift argument to establish almost sure convergence. We shall state the theorem formally as follows.


\begin{theorem}
    \label{theorem: sct_theorem}
    Let $\{Y_t\}_{t \ge 0}$, $\{X_t\}_{t \ge 0}$, $\{Z_t\}_{t \ge 0}$ be sequences of random variables, and let $\mathcal{F}_t$ be the sets of random variables such that $\mathcal{F}_t \subset \mathcal{F}_{t+1}$. Suppose that:
\begin{enumerate}[label=(\alph*)]
    \item The random variables $Y_t, X_t, Z_t$ are nonnegative and are functions of random variables in $\mathcal{F}_t$.
    \item For each $t \in \Z_{\geq 0}$, we have 
    \[
        \mathbb{E}[Y_{t+1} \mid \mathcal{F}_t] \le (1+\beta_t)Y_t - X_t + Z_t.
    \]
    \item The sequence $\{Z_t\}_{t \ge 0}$ is summable.
    \item The sequence $\{\beta_t\}_{t \ge 0}$ is summable.
\end{enumerate}
Then we have that $\{X_t\}_{t \ge 0}$ is summable and $\{Y_t\}_{t \ge 0}$ converges to a nonnegative random variable $Y$ with probability~1. 
\end{theorem}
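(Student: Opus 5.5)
The plan is to reduce the statement to the nonnegative supermartingale convergence theorem after a standard normalization. All summability hypotheses in (c) and (d) are understood almost surely. Set
\[
\pi_t := \prod_{s=0}^{t-1}(1+\beta_s)^{-1}, \qquad Y'_t := \pi_t Y_t, \qquad X'_t := \pi_{t+1}X_t, \qquad Z'_t := \pi_{t+1}Z_t .
\]
I will take the $\beta_t$ to be nonnegative and $\mathcal{F}_t$-measurable, as is implicit in the statement. Since $\sum\beta_t<\infty$, the product $\prod_s(1+\beta_s)$ converges, so $\pi_t$ decreases to a limit $\pi_\infty\in(0,1]$. Multiplying (b) by $\pi_{t+1}$ gives
\[
\mathbb{E}[Y'_{t+1}\mid\mathcal{F}_t]\le Y'_t - X'_t + Z'_t .
\]
Because $\pi_\infty>0$, summability of $X'_t$ and $Z'_t$ is equivalent to summability of $X_t$ and $Z_t$. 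Likewise, convergence of $Y'_t$ is equivalent to convergence of $Y_t$. This reduces the problem to the case $\beta_t\equiv0$.

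Next, define
\[
U_t := Y'_t + \sum_{s<t}X'_s - \sum_{s<t}Z'_s .
\]
The displayed inequality gives $\mathbb{E}[U_{t+1}\mid\mathcal{F}_t]\le U_t$, so $U_t$ is a supermartingale. It is not necessarily bounded below, and this is the one delicate step. I will handle it by localization. For $a>0$, let $\tau_a:=\inf\{t:\sum_{s\le t}Z'_s>a\}$. This is a stopping time because each $Z'_s$ is $\mathcal{F}_s$-measurable. The stopped process $U_{t\wedge\tau_a}$ is a supermartingale bounded below by $-a$: up to time $\tau_a$, the subtracted sum $\sum_{s<t\wedge\tau_a}Z'_s$ involves only indices $s<\tau_a$ and is therefore at most $a$. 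Integrability of $U_t$ is also needed for the supermartingale convergence theorem. If it is not assumed, I will localize further by $\sigma_b:=\inf\{t:Y'_t>b\}$, or simply assume integrability, as is customary in the Robbins--Siegmund setting.

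Applying the nonnegative supermartingale convergence theorem to $U_{t\wedge\tau_a}+a$, this process converges almost surely to a finite limit. On the event $\{\tau_a=\infty\}=\{\sum_s Z'_s\le a\}$ it follows that $U_t$ converges to a finite limit. Because $\sum_s Z'_s<\infty$ almost surely, the union of these events over $a\in\mathbb{N}$ has full probability. Hence $U_t$ converges almost surely, and so does $Y'_t+\sum_{s<t}X'_s$, since $\sum_{s<t}Z'_s$ converges.

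To finish, note that $Y'_t\ge0$ and $\sum_{s<t}X'_s$ is nondecreasing. The sum is bounded above by $\sup_t\bigl(Y'_t+\sum_{s<t}X'_s\bigr)<\infty$, so $\sum_s X'_s<\infty$. Therefore $Y'_t$ converges as the difference of two convergent sequences. Undoing the normalization through $\pi_\infty>0$ gives $\sum_t X_t<\infty$ and $Y_t\to Y:=Y'_\infty/\pi_\infty\ge0$ almost surely.

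The main obstacle is the lack of a lower bound and of integrability for $U_t$. The stopping-time localization above is the step I expect to require the most care.
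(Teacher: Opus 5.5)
Your proof is correct. It is essentially the original Robbins--Siegmund argument: normalize by $\pi_t=\prod_{s<t}(1+\beta_s)^{-1}$, compensate to obtain a supermartingale, and localize at $\tau_a$. The paper gives no proof of its own and simply cites \cite{robbins1971convergence,neurodynamic}, so there is no alternative route to compare.

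The only thing to repair is your integrability aside. Stopping at $\sigma_b$ does not help: if $Y_0>b$ the stopped process is the constant $Y_0$, and you would only get convergence on $\{\sup_t Y'_t<\infty\}$, which is not known in advance. No extra hypothesis is needed, however. On $\{\tau_a>t\}$ you have $V_{t+1}:=U_{(t+1)\wedge\tau_a}+a=Y'_{t+1}+\bigl(\sum_{s\le t}X'_s+a-\sum_{s\le t}Z'_s\bigr)$, where the bracket is nonnegative and $\mathcal{F}_t$-measurable. Hence $\mathbb{E}[V_{t+1}\mid\mathcal{F}_t]\le V_t$ holds as an inequality between conditional expectations of nonnegative variables, with no integrability required. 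Restricting to the $\mathcal{F}_0$-event $\{Y_0\le b\}$ then yields a genuine nonnegative supermartingale with mean at most $a+b$, since $V_0=Y_0+a$. Letting $b\to\infty$ covers all of $\Omega$.
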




Essentially, Theorem \ref{theorem: sct_theorem} says that it is sufficient to establish a "nice enough" one-step drift relation to obtain almost sure convergence. To do so, one need to choose a suitable non-negative potential function $V$ such that $Y_t = V(x_t)$ gives rise to the appropriate one-step drift that satisfies all conditions of Theorem \ref{theorem: sct_theorem}.
Naturally, the chosen function should contain information related to the quantity $\|x_k - x^\star\|$ so that when proving $V(x) \rightarrow 0$, our desired result follows. 
To this end, suppose that Assumption \ref{assumption: general-drift-condition} holds for some potential function $\Phi$ and Assumption \ref{assumption: noise-condition-p} holds with $p > 1$, we shall explain the outline of our proof below.

\subsection{$p = 2$}
While this is a known result \cite{robbins1971convergence,Borkar2008StochasticAA,zaiwei-envelope,hoang-nonlinear-sa}, we shall do a brief recap for the case $p = 2$ as a starting point. In this case, using $\Phi$ in Assumption \ref{assumption: general-drift-condition} directly as the Lyapunov function and noting that $\Phi$ has a similar behavior to $\norm{x-x^\star}^2$, we can establish a “drift inequality” as follows:
\begin{align}
    \label{equation: quadratic-drift}
    \E[\|x_{k+1} - x^\star\|^2 \mid F_{k}] \leq \|x_k - x^\star\|^2 - c_1 \alpha_k \|x_k - x^\star\|^2 + c_2 \alpha_k^2 \E[\|w_k\|^2 |F_k].
\end{align}
Which can be further bounded by Assumption \ref{assumption: noise-condition-p} as
\begin{align}
    \label{equation: quadratic-drift-2}
    \E[\underbrace{\|x_{k+1} - x^\star\|^2}_{Y_{k+1}} \mid F_{k}] \leq (1+\underbrace{c_2 B_p \alpha_k^2}_{\beta_k})\underbrace{\|x_k - x^\star\|^2}_{Y_k} - \underbrace{c_1 \alpha_k \|x_k - x^\star\|^2}_{X_k} + \underbrace{c_2 A_2 \alpha_k^2}_{Z_k}.
\end{align}
By choosing the step size $\alpha_k$ such that $\sum \alpha_k^2 < \infty$ and $\sum \alpha_k = \infty$, we can show that both $\{\beta_k\}_{k \geq 0}$ and $\{Z_k\}_{k \geq 0}$ are summable and thereby obtain the a.s. convergence result. 

\subsection{$p \in (1,2)$}
For $p = 2$, the proof follows naturally since we can treat $\Phi$ similarly to $\|x- x^{\star}\|^2$ and we can expand $\|x_{k+1} - x^{\star}\|^2$ binomially into the previous iteration term $\|x_k - x^{\star}\|^2$ and the noise term $\|w_k\|^2$, which can be bound separately to establish \eqref{equation: quadratic-drift}. 
For $p \in (1,2)$ or $p \neq 2$ generally, one would like to choose a potential function $V$ such that it behaves similarly to $\norm{x-x^\star}^p$ so that it can give rise to a one-step drift in the form of
\begin{align}
    \E\sqbr{V(x_{k+1}) | \cF_k} \leq (1+c_1 \alpha_k^p) V(x_k) - c_2 \alpha_k V(x_k) + c_3 \alpha_k^p.
\end{align}
Luckily, when choosing $V=\Phi^{\frac{p}{2}}$, we can obtain a negative drift similar to \eqref{eqn: negative-drift} for $V$ and $V(x) = \Theta\br{\norm{x-x^\star}^p}$ from \eqref{eqn: lyapunov-sandwich}. For $\|x - x^{\star}\|^p$, however, one can not easily expand this term as in the case $p = 2$.
Instead, we employ the following bound that is proved in \cite{rodomanov2020smoothness} for $p \in (1,2]$:
\begin{align}
        \label{equation: sub_quadratic_drift}
        \|v + u\|^p \leq \|v\|^p + p \frac{\langle v, u \rangle}{\|v\|^{2-p}} + 2^{2-p}\|u\|^p.
\end{align}
By choosing $v = x_{k} - x^\star$ and $u = x_{k+1} - x_k$, and taking conditional expectations on both sides, we can establish such drift inequality. In general, by proving that $\Phi^{\frac{p}{2}}$ has negative drift with constant $\eta_p$, we have:

\begin{align}
    \E[\Phi(x_k - x^{\star})^{\frac{p}{2}} |F_k] \leq \Phi(x_k - x^{\star})^{\frac{p}{2}} - c_3 \alpha_k \Phi(x_k - x^{\star})^{\frac{p}{2}} + c_4\alpha_k^pE[\|w_k\|^p|F_k].
\end{align}

For instance, when $\Phi(x) = \|x-x^\star\|^2$ then $\Phi(x)^{\frac{p}{2}} = \|x-x^\star\|^p$, we get:

\begin{equation}
\label{equation:p-drift}
\E\!\left[\underbrace{\|x_{k+1}-x^\star\|^p}_{Y_{k+1}} \,\middle|\, F_k\right]
\le
\underbrace{\|x_k-x^\star\|^p}_{Y_k}
-
\underbrace{c_3 \alpha_k \|x_k-x^\star\|^p}_{X_k}
+
\underbrace{c_4 \alpha_k^p \E[\|w_k\|^p \mid F_k]}_{Z_k}.
\end{equation}
Since we have $\sum \alpha_k = \infty$ and $\sum \alpha_k^p < \infty$, applying Theorem~\ref{theorem: sct_theorem} yields our claim. 




\subsection{$p > 2$}
While the proof strategy for $p \in (1,2]$ is rather straightforward, such a strategy is no longer applicable for $p > 2$.
As previously explained in the case $p \in (1,2)$, we would like to choose the potential function $V$ such that it behaves similarly to $\norm{x-x^\star}^p$. However, this gives rise to our second challenge, that is, such a Lyapunov function would produce a lot of nonlinear cross-terms when applying a Taylor-like bound for $p > 2$. To illustrate our point, we shall give concrete calculations for the simple case $p = 4$, as it is natural to attempt extending the bound for $p = 2$ to the case $p = 4$.



\subsubsection{Challenge}
We present the challenge that arises when using $V(x) = \norm{x-x^\star}^p$ directly by considering the case $p = 4$ as the motivating example where we expect step sizes satisfying $\sum \alpha_k = \infty$ and $\sum \alpha_k^4 < \infty$ are sufficient to obtain almost sure convergence. However, unlike the proof in the $p \in (1,2]$ case, using this Lyapunov function will in fact, yield a suboptimal guarantee even for the simple setting where $H \equiv 0$, and the iterate $x_k$ is one-dimensional. Indeed, we have the following proposition.
\begin{proposition}[Fourth--order Lyapunov drift, tightness]
\label{prop:fourth-drift}
Consider $(x_k)_{k\ge 0}$ satisfy 
\[
x_{k+1} = x_k + \alpha_k(-x_k + w_k),
\]
and let $\mathcal F_k := \sigma(x_0,\dots,x_k)$.
Assume
\[
\E[w_k \mid \mathcal F_k] = 0, \qquad
\E[w_k^2 \mid \mathcal F_k] \le A_2, \qquad
\E[|w_k|^3 \mid \mathcal F_k] \le A_3, \qquad
\E[w_k^4 \mid \mathcal F_k] \le A_4 .
\]
Then there exist constants $c,C>0$ such that
\[
\E[x_{k+1}^4 \mid \mathcal F_k]
\le x_k^4 - c\,\alpha_k x_k^4 + C'\,\alpha_k^3.
\]
Consequently, if one has $\sum_{k=1}^\infty \alpha_k = \infty$ and $\sum_{k=1}^\infty \alpha_k^3 < \infty$, the iteration $x_k$ converge almost surely. 
\end{proposition}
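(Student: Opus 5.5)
The plan is to expand the fourth power exactly and bound each term in conditional expectation. With $y_k := (1-\alpha_k)x_k$ and $x_{k+1} = y_k + \alpha_k w_k$, the binomial expansion gives
\begin{align}
x_{k+1}^4 = y_k^4 + 4\alpha_k y_k^3 w_k + 6\alpha_k^2 y_k^2 w_k^2 + 4\alpha_k^3 y_k w_k^3 + \alpha_k^4 w_k^4 .
\end{align}
Since $y_k$ is $\mathcal F_k$-measurable and $\E[w_k\mid\mathcal F_k]=0$, the linear term vanishes after conditioning. The moment bounds then give
\begin{align}
\E[x_{k+1}^4\mid\mathcal F_k] \le (1-\alpha_k)^4 x_k^4 + 6A_2\alpha_k^2 x_k^2 + 4A_3\alpha_k^3 |x_k| + A_4\alpha_k^4 .
\end{align}
Here I assume $\alpha_k \le 1$ for $k$ large, which holds because $\alpha_k\to 0$; the finitely many early indices do not affect almost sure convergence. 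Under this assumption $(1-\alpha_k)^4 \le 1-\alpha_k$.

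The next step absorbs the lower-order cross terms into the drift $-\alpha_k x_k^4$ using Young's inequality with weights matched so that the residual is $O(\alpha_k^3)$.

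For the quadratic term, apply Young with exponents $(2,2)$:
\begin{align}
6A_2\alpha_k^2 x_k^2 = \bigl(\alpha_k^{1/2} x_k^2\bigr)\bigl(6A_2\alpha_k^{3/2}\bigr) \le \tfrac{\alpha_k}{4} x_k^4 + 9A_2^2\alpha_k^3 .
\end{align}
For the cubic term, apply Young with exponents $(4, 4/3)$:
\begin{align}
4A_3\alpha_k^3 |x_k| = \bigl(\alpha_k^{1/4}|x_k|\bigr)\bigl(4A_3\alpha_k^{11/4}\bigr) \le \tfrac{\alpha_k}{4}x_k^4 + C\alpha_k^{11/3} .
\end{align}
The term $\alpha_k^{11/3}$ is at most $\alpha_k^3$ once $\alpha_k\le 1$, and likewise $\alpha_k^4\le\alpha_k^3$. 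Collecting everything yields
\begin{align}
\E[x_{k+1}^4\mid\mathcal F_k] \le x_k^4 - \tfrac12\alpha_k x_k^4 + C'\alpha_k^3 ,
\end{align}
so the drift inequality holds with $c = 1/2$.

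The quadratic cross term $6\alpha_k^2 x_k^2 w_k^2$ is the bottleneck. Balancing it against the drift $\alpha_k x_k^4$ forces a residual of order exactly $\alpha_k^3$, not $\alpha_k^4$. This is the source of the suboptimality the proposition is meant to exhibit, so I would remark on it rather than try to improve it.

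For the convergence claim, apply Theorem~\ref{theorem: sct_theorem} with
\begin{align}
Y_k = x_k^4,\qquad \beta_k = 0,\qquad X_k = \tfrac12\alpha_k x_k^4,\qquad Z_k = C'\alpha_k^3 .
\end{align}
The condition $\sum\alpha_k^3<\infty$ makes $Z_k$ summable. The theorem then gives $x_k^4\to Y$ almost surely and $\sum_k \alpha_k x_k^4<\infty$ almost surely. Because $\sum\alpha_k=\infty$, this forces $\liminf_k x_k^4 = 0$. Since $x_k^4$ has a limit, that limit is $Y=0$, and therefore $x_k\to 0$ almost surely.
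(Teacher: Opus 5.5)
Your proof is correct and follows essentially the same route as the paper: expand $((1-\alpha_k)x_k+\alpha_k w_k)^4$ binomially, remove the linear term by unbiasedness, use $(1-\alpha_k)^4\le 1-\alpha_k$, and apply Young's inequality to absorb the cross terms into $\alpha_k x_k^4$ with an $O(\alpha_k^3)$ residual. The small differences are that you absorb the cubic term directly with exponents $(4,4/3)$ where the paper first reduces it to $\alpha_k^2x_k^2$, and that you spell out the Robbins--Siegmund step and the need for $\alpha_k\le 1$, both of which the paper leaves implicit; one harmless slip is that your quadratic split actually gives $\tfrac{\alpha_k}{4}x_k^4+36A_2^2\alpha_k^3$ rather than $9A_2^2\alpha_k^3$, which only changes $C'$.
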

We defer the complete calculations of Proposition \ref{prop:fourth-drift} to Appendix \ref{sec: fourth-drift-proof}. As evident by the proposition and its proof, the strategy of directly using the Lyapunov function $\|x_k - x^\star\|^p$ can only yield suboptimal guarantees, and so our proof strategy here fails. In particular, if we choose the step size $\alpha_k = \alpha(k+K)^{-\xi}$, we can only guarantee $\xi > \frac{1}{3}$ yields almost sure convergence with this guarantee. This is due to the fact that when $p > 2$, applying Taylor-like bounds to the aforementioned Lyapunov function would produce many nonlinear cross-terms which are extremely non-trivial to handle. Recognizing this challenge, we thereby our approach as follows.


\subsubsection{Our proof strategy}

We begin by making some observations on the iterates. If $\norm{x_k - x^\star} > 2D$ for some any $D>0$ then from Equation~\eqref{equation: quadratic-drift} alone, we can obtain a contraction for small enough $\alpha_k$
\begin{equation}
    \E[\|x_{k+1} - x^\star\|^2 \mid \mathcal F_k]
    \leq (1 - c_1'\alpha_k)\|x_k - x^\star\|^2.
\end{equation}
Hence, a simple bound on the quadratic function that is similar to \eqref{equation: quadratic-drift} would suffice here. On the other hand, when $\norm{x_k-x^\star}$ is "small", the effect from the noise is comparatively greater, and hence it is harder to control the behavior of $u_k = \norm{x_k-x^\star}$ as the iterates exhibit oscillating behavior in this regime. Since we cannot control the size of $u_k$ and $\norm{w_k}$ in the transient, a Taylor-like bound would produce many nonlinear cross-terms between $u_k$ and $\norm{w_k}$, making it virtually impossible to bound in the presence of multiplicative noise. 
To resolve this, we introduce a Lyapunov function of the form
\[
z_k = \max(\min(2D,u_k),D) - D.
\]
Intuitively, $z_k + D$ is exactly the projection of $u_k$ onto the interval $[D,2D]$. The key idea here is that if $x_k$ converges to $x^\star$ almost surely (i.e. $u_k$ converges to $0$ almost surely) then $u_k$ can only cross the interval $[D,2D]$ finitely many times before staying in $[0,D)$ indefinitely. Thus, we reduce the problem to first show the almost sure convergence of $z_k$. Once this is established, all that remains is to show that if $z_k$ converges almost surely then it has to converge almost surely to $0$, which follows a standard argument as in \cite{neurodynamic, zaiwei-envelope}.

To show the almost sure convergence of $z_k$, it is sufficient to establish a one-step bound in the form of
\begin{align}
    \E\sqbr{z_{k+1}^p | \cF_k} \leq z_k^p + O(\alpha_k^p).
\end{align}
Thanks to the boundedness of $z_k$, that is $z_k \in [0,D]$, establishing this bound is straightforward from a Taylor-like estimate for $z_k^p$.


\begin{figure}[htbp]
\centering
\begin{tikzpicture}[scale=1.2]
\draw[->] (-.5,0) -- (6,0) node[right] {$k$ (iteration)};
\draw[->] (0,-.3) -- (0,3) node[above] {$u_k=\|x_k-x^\star\|$};

\fill[gray!15] (0,1) rectangle (6,2);      
\draw[dashed] (0,1) node[left] {$D$} -- (6,1);
\draw[dashed] (0,2) node[left] {$2D$} -- (6,2);

\draw[very thick,blue,->]
plot[
  domain=0:5.5,
  samples=300,
  variable=\x
]
(
  {\x},
  {2.4
   + 0.7*exp(-0.15*\x)*sin(80*\x)
   - 0.28*\x}
);
\node[blue,above right] at (5.4,0.95) {\small $u_k$};

\draw[thick,red,-stealth] (3.2,2.55) -- (3.2,2);
\node[red,above] at (3.2,2.6) {\small projection};
\node[red,right] at (3.2,1.5) {$z_k+D$};

\node[align=center,below] at (3,-.6)
 {\small The grey band is the ``deadly trap''.\\[-1mm]
  The trajectory oscillates above $2D$ and eventually enters at $D$.};
\end{tikzpicture}
\caption{Visual summary of the Lyapunov drift argument with the iterate projection trick.}
\label{fig:trap}
\end{figure}



\begin{remark}
Our approach is in fact an alternative implementation of the upcrossing interval technique in \cite{neurodynamic}. Indeed, showing $u_k$ can only exit $[0,D]$ finitely many times is equivalent to showing its projection onto $[D,2D]$ converges to $D$ almost surely, which is what we have achieved by introducing $z_k$ and proving that $z_k$ converges to $0$ almost surely.
\end{remark}

\begin{remark}
Our projection technique has some resemblances to the recent work of \cite{liu2025extensions}, where the author shows that SA iterates satisfying mild conditions converge to a bounded interval. Nevertheless, our approach departs from \cite{liu2025extensions} in several crucial ways that are critical to the success of the analysis. In particular, \cite{liu2025extensions} assumes that the associated Lyapunov sequence $\{u_k\}$ satisfies $|u_{k+1}-u_k| \le D \alpha_k (u_k + 1)$ almost surely for some constants $D > 0$, an assumption that is generally violated in the presence of multiplicative noise. In particular, an arbitrarily large noise realization may occur with nonzero probability, precluding the existence of such a bound for a fixed $D > 0$. Moreover, the proof in \cite{liu2025extensions} relies on the fact that for sufficiently large $u_k > D_3$, the conditional drift satisfies $\mathbb{E}[u_{k+1} - u_k \mid \mathcal{F}_k] \leq 0$, allowing the negative first-order term to dominate higher-order effects; however, this argument only holds in sufficiently large $D_3$ and therefore guarantees convergence merely to a bounded neighborhood. 

Instead of projecting to $[D, \infty)$, our proof uses a projection to the interval $[D,2D]$, thus conveniently controls the growth gap between consecutive term $u_{k},u_{k+1}$ effectively and exploit the negative drift of the form $\E[u_{k+1} - u_k | F_k] \leq -\alpha_k$. We believe that such technique, in combination with the usage of high moment inequality, can be of independent interest for literature.

\end{remark}


\section{Experiments}
\label{sec: applications}
In this section, we consider the potential application of our result for non-smooth exponentially stable systems, particularly those that satisfy Assumption \ref{assumption: general-drift-condition} with a piecewise Lyapunov function. Furthermore, this system is chosen precisely because there is no global quadratic Lyapunov function that admits the negative drift condition. To construct the noise, we follow the noise construction used in Section \ref{ssec: impossibility-theorem-proof}.

To be precise, we follow the setting in \cite{hoang-nonlinear-sa}: We consider the stochastic approximation recursion
\[
x_{k+1}=x_k+\alpha_k\bigl(Ax_k+B\min\{k_1^\top x_k,k_2^\top x_k\}+w_k\bigr),
\]
where $\{\alpha_k\}$ is a positive step–size sequence and $\{w_k\}$ is a martingale difference noise. Let
\[
A=\begin{bmatrix}-5&-4\\-1&-2\end{bmatrix},\quad
B=\begin{bmatrix}-3\\-21\end{bmatrix},\quad
k=\begin{bmatrix}1\\0\end{bmatrix},\quad
k_1=k,\ k_2=0.
\]
Then the drift is piecewise linear with $f(x)=A_1x$ for $k^\top x\le0$ and $f(x)=A_2x$ otherwise, where
\[
A_1=A+Bk_1^\top=
\begin{bmatrix}-8&-4\\-22&-2\end{bmatrix},\qquad
A_2=A=
\begin{bmatrix}-5&-4\\-1&-2\end{bmatrix}.
\]
Define the nonsmooth Lyapunov function
\[
V(x)=
\begin{cases}
x^\top Px, & k^\top x\le0,\\
x^\top(P+\eta kk^\top)x, & k^\top x>0,
\end{cases}
\]
with
\[
P=\begin{bmatrix}1&0\\0&3\end{bmatrix},\qquad
\eta=9,\qquad
P+\eta kk^\top=\begin{bmatrix}10&0\\0&3\end{bmatrix}.
\]
Both $P$ and $P+\eta kk^\top$ are positive definite and satisfy
\[
A_1^\top P+PA_1<0,\qquad
A_2^\top(P+\eta kk^\top)+(P+\eta kk^\top)A_2<0,
\]

From this, the author in \cite{hoang-nonlinear-sa} constructs a smooth Moreau envelope approximation $\Phi(x)$ to $V(x)$ that satisfies Assumption \ref{assumption: general-drift-condition}, which detail we omit here for clarity. To this end, let $\mathcal F_k=\sigma(w_1,\ldots,w_k)$, define the noise sequence by
\[
w_{k+1}=s_k \zeta_{k+1},
\qquad
\mathbb P(\zeta_{k+1}=\pm1)=q_k,\quad
\mathbb P(\zeta_{k+1}=0)=1-2q_k,
\]
where
\[
s_k=\frac{4}{\alpha}(k+K)^{\xi},
\qquad
q_k=c\,(k+K)^{-\xi p},
\quad c\in(0,1/2].
\]
Then $\{w_k\}$ is adapted to $\{\mathcal F_k\}$, satisfies $\mathbb E[w_{k+1}\mid\mathcal F_k]=0$, and
\[
\mathbb E\!\left[|w_{k+1}|^p\mid\mathcal F_k\right]
=2q_k s_k^p
=2c\left(\frac{4}{\alpha}\right)^p<\infty,
\]
hence $\sup_k \mathbb E|w_{k+1}|^p<\infty$. Letting $p = 1.6$, we follow this exact construction to construct the noise for our experiment, testing convergence for $\xi \in (\frac{1}{p},1]$. The results are shown for the three plots on the top row of \ref{fig: plot_2}.

\begin{figure}[h]
    \centering
    \includegraphics[width=0.95\textwidth]{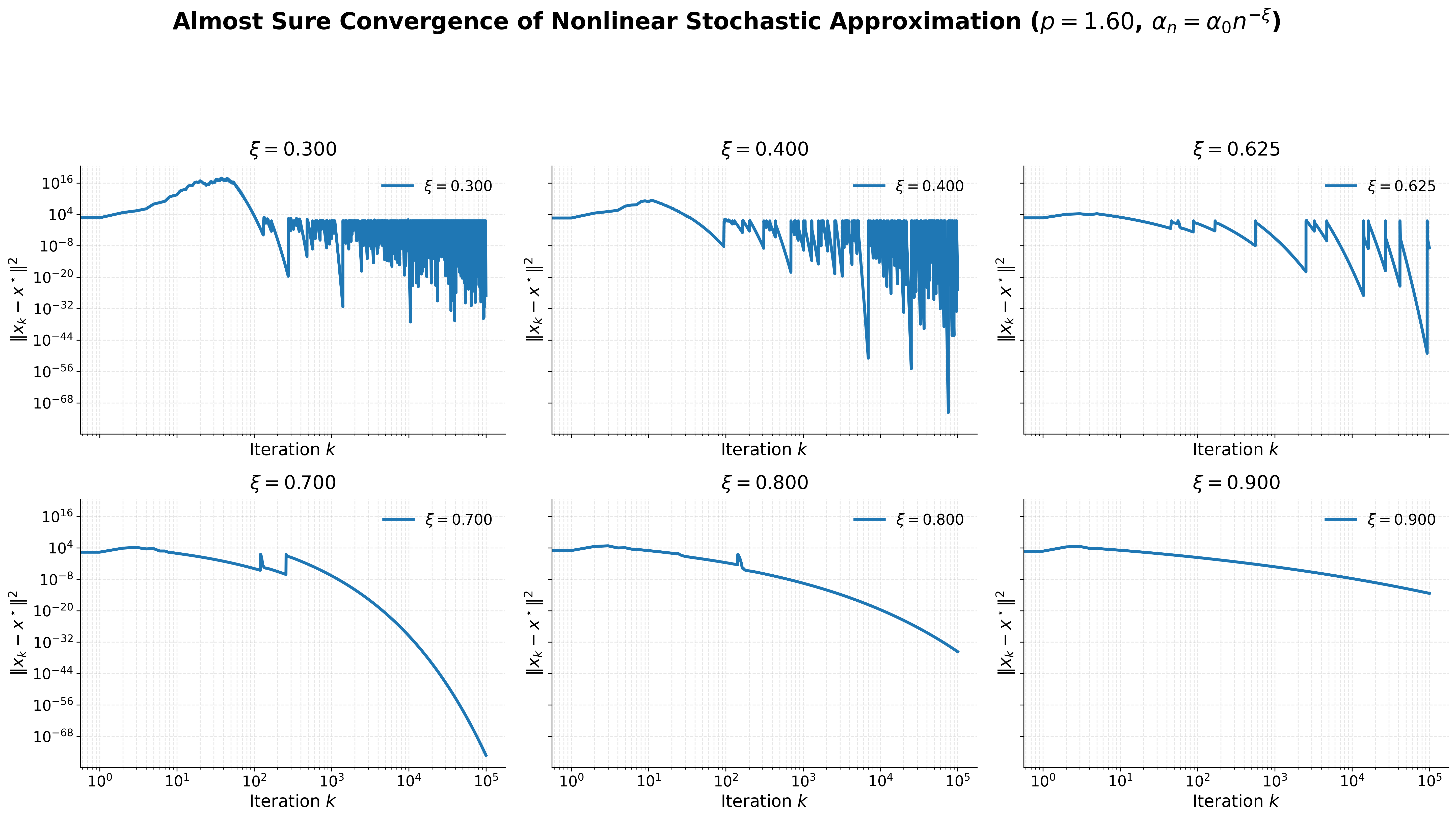}
    \caption{Selector control with theoretical noise and multiple step sizes, here $p^{-1} = 0.625$ is the divergence threshold for $p = 1.6$. Our experimental results confirm the tightness of the condition $\xi > p^{-1}$ for $p = 1.6$.}
    \label{fig: plot_2}
\end{figure}

Following the setting of the Selector Control experiment, we reuse the same noise construction, with the only change being the choice of $\xi \in (0,\frac{1}{p}]$. The results are shown in Figure ~\ref{fig: plot_2}. It can be observed that the experiments clearly capture the convergence behavior associated with each choice of step size. For $\xi \in (0,\tfrac{1}{p}]$, the algorithm exhibits infinitely many large fluctuations, leading to divergence, whereas this behavior does not occur for $\xi \in (\tfrac{1}{p},1]$. Moreover, the increased robustness obtained by choosing smaller step sizes, as predicted by the finite-time analysis, is also evident. Overall, the experimental results are consistent with the theoretical predictions.

\section{Proof of Theorem \ref{theorem: main-result}}
\label{sec: main-thm-proof}
\begin{proof}
    Because of the noise condition, we have that for each $\kappa < p$, there exists constants $A_{\kappa}, B_{\kappa}$ such that $\E[\|w_k\|^{\kappa} |F_k] \leq A_{\kappa} + B_{\kappa}\|x_k - x^\star\|^{\kappa}$. Indeed, for any $\kappa<p$, by conditional Jensen,
\begin{align}
    \E\!\left[\|w_k\|^{\kappa}\mid \mathcal F_k\right]
\le \Big(\E\!\left[\|w_k\|^{p}\mid \ \mathcal F_k\right]\Big)^{\kappa/p}
\le (A_p + B_p\|x_k-x^\star\|^{p})^{\kappa/p}
\le A_{\kappa}+B_{\kappa}\|x_k-x^\star\|^{\kappa}.
\end{align}

where the last inequality used the fact that $(x+y)^{\kappa/p} \leq x^{\kappa/p} + y^{\kappa/p}$ for $x,y > 0$ since $\frac{\kappa}{p} < 1$ (thus $A_{\kappa} = (A_p)^{\kappa/p}, B_{\kappa} = (B_p)^{\kappa/p}$.

We prove our result by consider two cases: Either $p \in (1,2]$ or $p \in (2, +\infty)$.

\paragraph{Case 1: $p \in (1,2]$.} 
In this case, we first prove the following Proposition:

\begin{proposition}
\label{lemma: suitable_drift}
Let $\Phi:\mathbb{R}^d\to\mathbb{R}_+$ satisfy Assumption~\ref{assumption: general-drift-condition}.
For $1<p\le2$, define $\Psi(x):=\Phi(x)^{p/2}$. Then there exist constants
$a_1=c_1^{p/2}$, $a_2=c_2^{p/2}$ and $\eta_p,L_p>0$ such that for all $x,y\in\mathbb{R}^d$,
\begin{align}
a_1\|x\|^p \le \Psi(x) \le a_2\|x\|^p, \\
\langle\nabla\Psi(x-x^\star),H(x)-x\rangle \le -\eta_p\Psi(x-x^\star), \\
\Psi(x)\le \Psi(y)+\langle\nabla\Psi(y),x-y\rangle+\frac{L_p}{p}\|x-y\|^p.
\end{align}
\end{proposition}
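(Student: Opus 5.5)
The plan is to verify the three properties of $\Psi=\Phi^{p/2}$ in turn. The first two follow directly from Assumption~\ref{assumption: general-drift-condition}. The third, the $p$-th order upper bound, is the main obstacle; I would handle it with a case split rather than a Hölder-gradient argument.

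\emph{Sandwich and gradient.} Raising \eqref{eqn: lyapunov-sandwich} to the power $p/2>0$ gives $c_1^{p/2}\|x\|^p\le\Psi(x)\le c_2^{p/2}\|x\|^p$. For the gradient I first record an auxiliary bound. Since $\Phi\ge0$, applying \eqref{eqn: holder-smoothness} at $y=x-\nabla\Phi(x)/L_2$ gives
\begin{align}
\|\nabla\Phi(x)\|^2\le 2L_2\Phi(x).
\end{align}
Where $\Phi(x)>0$ we have $\nabla\Psi(x)=\tfrac p2\Phi(x)^{p/2-1}\nabla\Phi(x)$, and therefore $\|\nabla\Psi(x)\|\le \tfrac p2\sqrt{2L_2}\,\Phi(x)^{(p-1)/2}$. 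At points with $\Phi(x)=0$ (only $x=0$, by the sandwich), $\Psi(x+h)\le c_2^{p/2}\|h\|^p=o(\|h\|)$ because $p>1$. So $\nabla\Psi(0)=0$, and both the formula and the norm bound extend to all $x$.

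\emph{Drift.} Multiply \eqref{eqn: negative-drift} by $\tfrac p2\Phi(x-x^\star)^{p/2-1}\ge0$ to get
\begin{align}
\langle\nabla\Psi(x-x^\star),H(x)-x\rangle\le-\tfrac{p\eta}{2}\Psi(x-x^\star).
\end{align}
This holds trivially at $x=x^\star$, where both sides vanish. Hence $\eta_p=p\eta/2$.

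\emph{$p$-smoothness.} This is the step I expect to be the main obstacle. Fix $x,y$, write $r=\|x-y\|$, and split into two cases according to the size of $\Phi(y)$ relative to $r^2$.

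Case $\Phi(y)\ge r^2$. Since $t\mapsto t^{p/2}$ is concave,
\begin{align}
\Psi(x)\le\Psi(y)+\tfrac p2\Phi(y)^{p/2-1}\bigl(\Phi(x)-\Phi(y)\bigr).
\end{align}
Bounding $\Phi(x)-\Phi(y)$ by \eqref{eqn: holder-smoothness} then gives
\begin{align}
\Psi(x)\le\Psi(y)+\langle\nabla\Psi(y),x-y\rangle+\tfrac{pL_2}{4}\Phi(y)^{p/2-1}r^2.
\end{align}
Because $p/2-1\le0$ and $\Phi(y)\ge r^2$, we have $\Phi(y)^{p/2-1}r^2\le r^{p-2}r^2=r^p$, which closes this case.

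Case $\Phi(y)<r^2$. I bound each piece by a multiple of $r^p$. From \eqref{eqn: holder-smoothness} and the gradient bound,
\begin{align}
\Phi(x)\le\Phi(y)+\sqrt{2L_2\Phi(y)}\,r+\tfrac{L_2}{2}r^2\le C r^2,
\end{align}
so $\Psi(x)\le C^{p/2}r^p$. Also
\begin{align}
|\langle\nabla\Psi(y),x-y\rangle|\le\tfrac p2\sqrt{2L_2}\,\Phi(y)^{(p-1)/2}r\le C' r^p.
\end{align}
Since $\Psi(y)\ge0$, it follows that
\begin{align}
\Psi(x)\le\Psi(y)+\langle\nabla\Psi(y),x-y\rangle+(C^{p/2}+C')r^p.
\end{align}

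Taking $L_p/p:=\max\{pL_2/4,\;C^{p/2}+C'\}$ covers both cases, and this completes the proof.
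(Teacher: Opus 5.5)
Your proof is correct and follows the paper's argument. Both use the same bound $\|\nabla\Phi\|^2\le 2L_2\Phi$ from the step $y-\nabla\Phi(y)/L_2$, the same constant $\eta_p=p\eta/2$, and the same two-case split (concavity of $t\mapsto t^{p/2}$ when $y$ is large relative to $\|x-y\|$, termwise bounds otherwise); the differences are cosmetic: you threshold on $\Phi(y)$ versus $\|x-y\|^2$ rather than $\|y\|$ versus $\tfrac12\|x-y\|$, which removes the sandwich constants from that step, and you check $\nabla\Psi(0)=0$ explicitly where the paper leaves it implicit.
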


\begin{proof}
From $c_1\|x\|^2\le\Phi(x)\le c_2\|x\|^2$ and $\Psi(x) = \Phi(x)^{p/2} \, \forall x \in \R^d$, we have
\begin{align}
c_1^{p/2}\|x\|^p \le \Psi(x) \le c_2^{p/2}\|x\|^p.
\end{align}
Moreover, from chain rule, we have
\begin{align}
\nabla\Psi(x)=\frac{p}{2}\Phi(x)^{\frac{p}{2}-1}\nabla\Phi(x),
\end{align}
hence
\begin{align}
\langle\nabla\Psi(x-x^\star),H(x)-x\rangle
\le -\tfrac{p}{2}\eta\,\Phi(x-x^\star)^{p/2}
= -\eta_p\Psi(x-x^\star).
\end{align}

Choose $x=y-\frac{1}{L_2}\nabla\Phi(y)$. By smoothness of $\Phi$,
\begin{align}
0\le\Phi(x)\le\Phi(y)-\frac{1}{2L_2}\|\nabla\Phi(y)\|^2
\;\Rightarrow\;
\|\nabla\Phi(y)\|^2\le2L_2\Phi(y)\le2L_2c_2\|y\|^2,
\end{align}
and therefore
\begin{align}
\|\nabla\Psi(y)\|
\le \tfrac{p}{2}\Phi(y)^{\frac{p}{2}-1}\|\nabla\Phi(y)\|
\le C\|y\|^{p-1}.
\end{align}

If $\|y\|\le\frac12\|x-y\|$, then $\|x\|\le\frac32\|x-y\|$ and
\begin{align}
\Psi(x)-\Psi(y)-\langle\nabla\Psi(y),x-y\rangle
\le C_1\|x-y\|^p.
\end{align}
If $\|y\|\ge\frac12\|x-y\|$, concavity of $t^{p/2}$ and smoothness of $\Phi$ yield
\begin{align}
\Psi(x)-\Psi(y)
&\le \tfrac{p}{2}\Phi(y)^{\frac{p}{2}-1}\big(\Phi(x)-\Phi(y)\big) \\
&\le \langle\nabla\Psi(y),x-y\rangle
+ C_2\|y\|^{p-2}\|x-y\|^2
\le \langle\nabla\Psi(y),x-y\rangle + C_3\|x-y\|^p.
\end{align}
Taking $L_p=p\max\{C_1,C_3\}$ concludes the proof.
\end{proof}

Next, we show that by choosing $\Psi(x)$ as the Lyapunov function, we can establish a drift as follows:

\begin{lemma}
\label{lemma: p_smaller_2_general_drift}
For $1 < \kappa \le 2$, we have
\begin{equation}
\E\!\left[\Psi(x_{k+1} - x^\star) \mid \mathcal{F}_k\right]
\le
\Bigl(1 - \frac{\eta_p}{2}\alpha_k\Bigr)\Psi(x_k - x^\star)
+ \frac{L_p}{p} 2^{2-p} A_p \alpha_k^p .
\end{equation}
\end{lemma}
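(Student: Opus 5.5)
The plan is a one-step Taylor-type expansion using the smoothness inequality of Proposition~\ref{lemma: suitable_drift}, followed by conditioning on $\mathcal F_k$. The martingale property kills the linear noise term, and the $p$-th power remainder is controlled by Assumption~\ref{assumption: noise-condition-p} and the Lipschitz property of $H$.

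\textbf{Expansion.} Set $y = x_k - x^\star$ and $x = x_{k+1}-x^\star = y + \alpha_k\bigl(H(x_k)-x_k+w_k\bigr)$. The third inequality of Proposition~\ref{lemma: suitable_drift} gives
\begin{align}
\Psi(x_{k+1}-x^\star) \le \Psi(x_k-x^\star) + \alpha_k\bigl\langle \nabla\Psi(x_k-x^\star), H(x_k)-x_k\bigr\rangle + \alpha_k\bigl\langle \nabla\Psi(x_k-x^\star), w_k\bigr\rangle + \frac{L_p}{p}\alpha_k^p\bigl\|H(x_k)-x_k+w_k\bigr\|^p .
\end{align}

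\textbf{Conditioning.} I take $\mathbb E[\cdot\mid\mathcal F_k]$ term by term.
\begin{itemize}
\item The first inner product is $\mathcal F_k$-measurable. By the drift inequality of Proposition~\ref{lemma: suitable_drift} it is at most $-\eta_p\alpha_k\Psi(x_k-x^\star)$.
\item The second inner product has zero conditional mean by Assumption~\ref{assumption: noise-unbiasedness}, since $\nabla\Psi(x_k-x^\star)$ is $\mathcal F_k$-measurable. Integrability holds because $\|\nabla\Psi(y)\|\le C\|y\|^{p-1}$ and $\mathbb E\|w_k\|<\infty$.
\item For the remainder I use $\|a+b\|^p\le 2^{p-1}(\|a\|^p+\|b\|^p)$. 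Since $H(x^\star)=x^\star$, Assumption~\ref{assumption: Lipschitz} gives $\|H(x_k)-x_k\|\le (C+1)\|x_k-x^\star\|$. Assumption~\ref{assumption: noise-condition-p} bounds the noise part by $A_p + B_p\|x_k-x^\star\|^p$.
\item Using $\|x_k-x^\star\|^p\le \Psi(x_k-x^\star)/a_1$, the remainder is at most $\frac{L_p}{p}\alpha_k^p\bigl(c_A A_p + c_B\Psi(x_k-x^\star)\bigr)$. Here $c_A$ is a numerical constant ($2^{p-1}$ from the splitting, which I would reconcile with the stated $2^{2-p}$, possibly by tightening the splitting or simply adjusting the constant), and $c_B = 2^{p-1}\bigl((C+1)^p + B_p\bigr)/a_1$.
\end{itemize}

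\textbf{Absorption.} This gives
\begin{align}
\mathbb E[\Psi(x_{k+1}-x^\star)\mid\mathcal F_k] \le \Bigl(1-\eta_p\alpha_k + \tfrac{L_p c_B}{p}\alpha_k^{p}\Bigr)\Psi(x_k-x^\star) + \tfrac{L_p c_A}{p} A_p\alpha_k^p .
\end{align}
To reach the stated form, I absorb the multiplicative term into half the drift. This requires $\frac{L_p c_B}{p}\alpha_k^{p-1}\le \eta_p/2$. Because $p>1$ and $\alpha_k\to0$ (it is non-increasing with $\sum\alpha_k^p<\infty$), this holds for all $k\ge k_0$.

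\textbf{Main obstacle.} The absorption step is the delicate point. The statement is only literally true for $k\ge k_0$, or else under a smallness assumption on $\alpha_0$. I would state it for $k\ge k_0$, and note that for the purposes of Theorem~\ref{theorem: sct_theorem} it is equally good to keep the unabsorbed form $(1+\beta_k)\Psi - X_k + Z_k$ with $\beta_k \propto \alpha_k^p$ summable. Either version is enough for the downstream application of Theorem~\ref{theorem: sct_theorem}.
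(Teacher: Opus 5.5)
Your overall route is the paper's. You expand with the $p$-smoothness inequality of Proposition~\ref{lemma: suitable_drift}, kill $\alpha_k\langle\nabla\Psi(x_k-x^\star),w_k\rangle$ by unbiasedness, bound the remainder with the Lipschitz property and Assumption~\ref{assumption: noise-condition-p}, and absorb the $\alpha_k^p\Psi$ term into half the drift once $\alpha_k$ is small. The paper does the absorption by ``choosing $K$ large enough'', so your $k\ge k_0$ caveat is exactly what is needed there too. Your $(C+1)^p$ is also the correct constant where the paper writes $C^p$.

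The one step that does not deliver the lemma as stated is the remainder. The convexity split $\|a+b\|^p\le 2^{p-1}(\|a\|^p+\|b\|^p)$ puts $2^{p-1}$ in front of $A_p$. Since $2^{p-1}>2^{2-p}$ whenever $p>3/2$ (at $p=2$ you get $2$ instead of $1$), ``adjusting the constant'' would mean proving a different inequality on $(3/2,2]$. The missing tool is the sub-quadratic bound \eqref{equation: sub_quadratic_drift}, applied inside the remainder with $v=H(x_k)-x_k$ and $u=w_k$:
\begin{align*}
\E\bigl[\|H(x_k)-x_k+w_k\|^p\mid\mathcal F_k\bigr]
&\le \|H(x_k)-x_k\|^p + p\,\|H(x_k)-x_k\|^{p-2}\bigl\langle H(x_k)-x_k,\,\E[w_k\mid\mathcal F_k]\bigr\rangle \\
&\quad + 2^{2-p}\,\E\bigl[\|w_k\|^p\mid\mathcal F_k\bigr].
\end{align*}
The middle term vanishes by Assumption~\ref{assumption: noise-unbiasedness}. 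You are thus using unbiasedness a second time, in the remainder and not only in the linear term. This gives coefficient $1$ on $\|H(x_k)-x_k\|^p$ and exactly $2^{2-p}$ on $\E[\|w_k\|^p\mid\mathcal F_k]$, hence the stated $\frac{L_p}{p}2^{2-p}A_p\alpha_k^p$ after absorption.

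For $p\le 3/2$ your constant is already no worse than the stated one. For the downstream use of Theorem~\ref{theorem: sct_theorem} any constant would do. To prove the lemma with its stated constant, however, you need this replacement.
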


Indeed, using Proposition~\ref{lemma: suitable_drift} with
$x = x_k - x^\star$ and $y = x_{k+1} - x^\star$, we obtain
\begin{equation}
\Psi(x_{k+1} - x^\star)
\le
\Psi(x_k - x^\star)
+ \big\langle \nabla \Psi(x_k - x^\star),\, x_{k+1} - x_k \big\rangle
+ \frac{L_p}{p} \|x_{k+1} - x_k\|^p .
\end{equation}

We next bound the expectation of the tail term $\|x_{k+1} -x_k \|^p = \alpha_k^p\|H(x_k) - x_k + w_k\|^p$. Indeed, using Equation \eqref{equation: sub_quadratic_drift} with $v = H(x_k) - x_k, u = w_k$, taking conditional expectation we get:

\begin{align}
    \label{equation: sub_quadratic_tail_estimation}
    \E[\|H(x_k) -x_k + w_k\|^p|F_k] \leq \|H(x_k) - x_k\|^p + 2^{2-p}\E[\|w_k\|^p|F_k].
\end{align}

Next, using Equation~\eqref{equation: sub_quadratic_tail_estimation} and
Assumption \ref{assumption: Lipschitz}, we have 
\begin{align}
\E\!\left[\Psi(x_{k+1}-x^\star)\mid \mathcal F_k\right]
&\overset{(a)}{\le}
\Psi(x_k-x^\star)
+ \E\!\left[
\big\langle \nabla \Psi(x_k-x^\star),\, x_{k+1}-x_k \big\rangle
\mid \mathcal F_k
\right]
+ \E\!\left[
\frac{L_p}{p}\|x_{k+1}-x_k\|^p
\mid \mathcal F_k
\right]
\\[0.4em]
&\overset{(b)}{=}
\Psi(x_k-x^\star)
+ \alpha_k \E\!\left[
\big\langle \nabla \Psi(x_k-x^\star),\,
H(x_k)-x_k + w_k \big\rangle
\mid \mathcal F_k
\right]
\notag\\
&\hspace{5.2em}
+ \E\!\left[
\frac{L_p}{p}\alpha_k^p
\|H(x_k)-x_k+w_k\|^p
\mid \mathcal F_k
\right]
\\[0.6em]
&\overset{(c)}{\le}
\Psi(x_k-x^\star)
+ \alpha_k \big\langle \nabla \Psi(x_k-x^\star),\, H(x_k)-x_k \big\rangle
\notag\\
&\hspace{5.2em}
+ \frac{L_p}{p}\alpha_k^p
\E\!\left[\|H(x_k) - x_k\|^p + 2^{2-p}\|w_k\|^p
\mid \mathcal F_k
\right]
\\[0.6em]
&\overset{(d)}{\le}
\Psi(x_k-x^\star)
- \eta_p \alpha_k \Psi(x_k-x^\star)
\notag\\
&\hspace{5.2em}
+ \frac{L_p}{p}\alpha_k^p
\E\!\left[\|H(x_k)-x_k\|^p \mid \mathcal F_k\right]
\notag\\
&\hspace{5.2em}
+ \frac{L_p}{p}2^{2-p}\alpha_k^p
\E\!\left[\|w_k\|^p \mid \mathcal F_k\right]
\\[0.6em]
&\overset{(e)}{\le}
(1-\eta_p\alpha_k)\Psi(x_k-x^\star)
+ \frac{L_p}{p} C^p \alpha_k^p
\Big(\|x_k-x^\star\|^p \Big)
\notag\\
&\hspace{5.2em}
+ \frac{L_p}{p}2^{2-p}\alpha_k^p
\Big(A_p + B_p\|x_k-x^\star\|^p\Big)
\\[0.6em]
&\overset{(f)}{\le}
\Bigg(
1-\eta_p\alpha_k
+ \frac{L_p}{a_1 p}\big(C^p+B_p2^{2-p}\big)\alpha_k^p
\Bigg)\Psi(x_k-x^\star)
+ \frac{L_p}{p}2^{2-p}A_p\,\alpha_k^p .
\end{align}
Where (b) follows from using the definition of the iteration $x_{k+1} - x_k = \alpha_k(H(x_k) - x_k + w_k)$, (c) and (d) used C-S inequality and Assumption \ref{assumption: general-drift-condition}, finally (e) follows from \ref{assumption: noise-condition-p} and (f) follows from the growth bound in Assumption \ref{assumption: general-drift-condition}.
Thus, by choosing $K$ large enough so that $-\eta_p\alpha_k
+ \frac{L_p}{a_1 p}\big(C^p+B_p2^{2-p}\big)\alpha_k^p < -\frac{\eta_p}{2} \alpha_k$, we establish the drift in \ref{lemma: p_smaller_2_general_drift}. Applying Theorem \ref{theorem: sct_theorem}, we easily have $\Phi(x_k - x^\star) \rightarrow 0$ almost surely.

\paragraph{Case 2: $p \in (2,\infty)$}

First, for this case,  utilizing the Lyapunov function $\Phi(x)$,
there exists constants $b_1,b_2$ such that:
\begin{align}
\label{equation: p_larger_2_general_second_moment_drift}
\E[\Phi(x_{k+1} - x^\star)\mid \mathcal F_k]
\le (1-b_1\alpha_k)\Phi(x_k - x^\star) + b_2\alpha_k^2 .
\end{align}

Applying the conditional Jensen inequality, we easily get:
\begin{align}
\E[\sqrt{\Phi(x_{k+1} - x^\star)}\mid \mathcal F_k]
\le \sqrt{(1-b_1\alpha_k)\Phi(x_k - x^\star) + b_2\alpha_k^2}.
\end{align}

Let define $u_k = \sqrt{\Phi(x_k - x^\star)}$, we wish to show that
there exists a constant $b_3$ such that:
\begin{align}
\label{equation: almost_surely_drift_distance}
|u_{k+1} - u_k|
\le b_3 \alpha_k (u_k + \|w_k\|).
\end{align}

Indeed, we note that the Lyapunov function is assumed to be smooth,
and because of the quadratic constraint, it is clear that
$\Phi(0) = 0$ and $\|\nabla \Phi(0)\| = 0$.
Applying the smoothness inequality, we get
$\|\nabla \Phi(x)\| \le L_2 \|x\|$.
By using the Intermediate Value Theorem, we observe that:
\begin{align}
\label{equation: gap_bounding_eqn_1}
\big| \sqrt{\Phi(x)} - \sqrt{\Phi(y)} \big|
&\le \sup_{z \in [x,y]} \| \nabla \sqrt{\Phi}(z) \|_2 \, \|x-y\|_2 \\
&= \sup_{z \in [x,y]} \frac{\| \nabla \Phi(z) \|_2}{2\sqrt{\Phi(z)}} \, \|x-y\|_2 \\
&\le \sup_{z \in [x,y]} \frac{L \|z\|_2}{2\sqrt{c_1}\|z\|_2} \, \|x-y\|_2 \\
&= \frac{L}{2\sqrt{c_1}} \, \|x-y\|_2 .
\end{align}

On the other hand, by Assumption \ref{assumption: Lipschitz},
we observe that:
\begin{align}
\|H(x_k) - x_k\|
\le \|H(x_k) - x^\star\| + \|x_k - x^\star\| \le (C+1)\|x_k - x^\star\| .
\end{align}

Thus, using $|\|a\| - \|b\|| \le \|a-b\|$, letting
$a = x_{k+1} - x^\star$ and $b = x_k - x^\star$, in combination with Assumption \ref{assumption: Lipschitz}, we get:
\begin{align}
\label{equation: gap_bounding_eqn_2}
\big| \|x_{k+1} - x^\star\| - \|x_k - x^\star\| \big|
&\le \|x_{k+1} - x_k\| \notag\\
&\le \alpha_k \big( \|H(x_k) - x_k\| + \|w_k\| \big)  \notag\\
& \le \alpha_k(\|H(x_k) - H(x^{\star})|| + \|x _k - x^{\star}\| + \|w_k\|)  \\
&\le (C+1) \alpha_k \big( \|x_k - x^\star\| + \|w_k\| \big),
\qquad \text{a.s.}
\end{align}

Using both estimates \eqref{equation: gap_bounding_eqn_1}
and \eqref{equation: gap_bounding_eqn_2},
together with the quadratic growth
Assumption~\eqref{eqn: lyapunov-sandwich}, we have:
\begin{align}
\big| \sqrt{\Phi(x_{k+1} - x^\star)}
      - \sqrt{\Phi(x_k - x^\star)} \big|
&\le \frac{L}{2\sqrt{c_1}}\,
      (C+1)\alpha_k
      \left(
        \frac{\sqrt{\Phi(x_k - x^\star)}}{\sqrt{c_1}}
        + \|w_k\|
      \right) \notag\\
&\le b_3 \alpha_k
      \left(
        \sqrt{\Phi(x_k - x^\star)}
        + \|w_k\|
      \right),
\qquad \text{a.s.}
\end{align}

where we absorb constants into $b_3$
(we also used the estimate
$\frac{\Phi(x_k - x^\star)}{\sqrt{c_1}} + \|w_k\|
\le \max(\frac{1}{\sqrt{c_1}},1)
(\Phi(x_k - x^\star) + \|w_k\|)$).

For an estimation of the polynomial potential function $|x|^p$,
we use the following scalar version of Lemma~2.5 in~\cite{adil2024fast}:

\begin{lemma}
\label{lemma: p_larger_2_lp_regression_drift}
For any $x,\Delta \in \mathbb{R}$ and any $p \ge 2$,
\[
\frac{p}{8}\, |x|^{p-2} \Delta^{2}
+ \frac{1}{2^{p+1}} |\Delta|^{p}
\le
|x+\Delta|^{p} - |x|^{p}
- p |x|^{p-1} \operatorname{sgn}(x)\,\Delta
\le
2 p^{2}\, |x|^{p-2} \Delta^{2}
+ p^{p} |\Delta|^{p}.
\]
\end{lemma}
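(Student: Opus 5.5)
The plan is to reduce to a one-variable inequality by homogeneity and then use the integral form of Taylor's theorem for $f(y)=|y|^p$, which is $C^2$ when $p\ge 2$. When $x=0$ the middle quantity is just $|\Delta|^p$, which lies between $2^{-p-1}|\Delta|^p$ and $p^p|\Delta|^p$. When $x\neq 0$, both sides are homogeneous of degree $p$ under $(x,\Delta)\mapsto(\lambda x,\lambda\Delta)$ and invariant under $(x,\Delta)\mapsto(-x,-\Delta)$. So we may take $x=1$ and set $t=\Delta$, and it suffices to prove
\[
\tfrac{p}{8}t^2+2^{-p-1}|t|^p\;\le\; g(t):=|1+t|^p-1-pt\;\le\;2p^2t^2+p^p|t|^p \qquad \forall t\in\mathbb{R}.
\]
The main tool is the identity $g(t)=p(p-1)t^2\int_0^1(1-s)|1+st|^{p-2}\,ds$. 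For $p=2$ we have $g(t)=t^2$ exactly and there is nothing to prove, so assume $p>2$.

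\emph{Upper bound.} I will split at $|t|=\frac{1}{p-1}$. If $|t|\le \frac1{p-1}$, then $|1+st|^{p-2}\le (1+\frac1{p-1})^{p-2}\le e$. The integral is then at most $e/2$, so $g(t)\le \frac e2 p(p-1)t^2\le 2p^2t^2$. If $|t|\ge\frac1{p-1}$, I drop the integral and bound crudely: $g(t)\le(1+|t|)^p+p|t|$. Since $1/|t|\le p-1$, we get $(1+|t|)^p=|t|^p(1+1/|t|)^p\le p^p|t|^p$. Also $p|t|\le p(p-1)t^2\le 2p^2t^2$. Summing the two bounds gives the claim.

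\emph{Lower bound.} I will prove separately that $g(t)\ge \frac p4t^2$ and $g(t)\ge 2^{-p}|t|^p$; averaging the two then gives the stated left-hand side. For the quadratic piece with $|t|\le 2$, I restrict the integral to $s\in[0,\frac1{2p}]$. There $|1+st|\ge 1-\frac1p$, so $|1+st|^{p-2}\ge(1-\frac1p)^{p-2}\ge e^{-1}$. The integral is then at least $\frac{1}{2p}(1-\frac1{4p})e^{-1}$, which yields $g(t)\ge c\,(p-1)t^2$ with an absolute $c$. The constants here need checking, and if they are tight I will shorten the window to $s\le 1/p$ with $|t|\le 1$ and handle $1\le|t|\le 2$ by convexity of $g$. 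For the $p$-th power piece with $|t|\le 2$, I use $|t|^p\le 2^{p-2}t^2$, so $2^{-p}|t|^p\le t^2/4\le g(t)$ once the quadratic piece is in hand. For $|t|\ge 2$ I will argue directly. For $t\ge 2$, $g$ is convex with $g(0)=g'(0)=0$, and $(1+t)^p\ge t^p+pt^{p-1}$ dominates $1+pt+\frac p4t^2+2^{-p}t^p$ after elementary comparisons. For $t\le-2$, both $|1+t|\ge|t|/2$ and $-pt=p|t|$ are nonnegative contributions, giving $g(t)\ge 2^{-p}|t|^p-1+p|t|$; then $p|t|-1\ge \frac p4 t^2$ must be checked, or else obtained from the integral restricted to $s\le 1/|t|$.

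I expect the main obstacle to be keeping the constants uniform in $p$. The naive split at $|t|\le\frac12$ produces factors like $(3/2)^{p-2}$ or $2^{-(p-2)}$, which ruin the bounds $2p^2$ and $p/8$. This is why the splitting thresholds are chosen as $O(1/p)$, so that $(1+O(1/p))^{p}$ stays bounded by $e$. The large-$|t|$, negative-$t$ lower bound is the fiddliest case, and I would handle it first via the integral restricted to $s\in[0,1/(p|t|)]$.
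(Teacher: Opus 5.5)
Your upper bound is correct: the reduction to $g(t)=|1+t|^p-1-pt$, the remainder formula $g(t)=p(p-1)t^2\int_0^1(1-s)|1+st|^{p-2}\,ds$, and the split at $|t|=\frac{1}{p-1}$ all check out. That is the only half the paper uses, and the paper gives no proof of its own: it imports the lemma as the scalar case of Lemma~2.5 of Adil et al.

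The lower bound has a genuine gap in the quadratic piece $g(t)\ge\frac p4t^2$. The averaging architecture is fine. But on $|t|\le 2$ your window $[0,\frac1{2p}]$ yields only $g(t)\ge\frac{(p-1)(1-\frac1{4p})}{2e}\,t^2$. Since $2e>4$, this is strictly below $\frac p4t^2$ for \emph{every} $p\ge2$. The fallback window $[0,\frac1p]$ with $(1-\frac1p)^{p-2}\ge e^{-1}$ gives $\frac{(p-1)(1-\frac1{2p})}{e}\,t^2$. That is still below $\frac p4t^2$ for $2\le p<4.3$; at $p=2$ it is about $0.28\,t^2$ versus $0.5\,t^2$. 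The losses from truncation and from the $e^{-1}$ factor are too large near $p=2$, where the target $\frac p4$ is only half the true curvature $\frac{p(p-1)}2$.

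The case $t\le-2$ is worse. The inequality $p|t|-1\ge\frac p4t^2$ that you say must be checked is false in general: it holds only for $|t|\le 2+2\sqrt{1-1/p}<4$. The alternative windows $s\le 1/|t|$ and $s\le 1/(p|t|)$ have length $O(1/|t|)$, so they give only a bound linear in $|t|$. For $t\to-\infty$ the quadratic growth comes from the region $s|t|>2$, i.e.\ from $(|t|-1)^p$ itself. The crude bound $|1+t|\ge|t|/2$ loses a factor $2^p$ that cannot be recovered near $p=2$: at $p=2$ it leaves $\frac{t^2}4+2|t|-1$, which is below even the averaged target $\frac38t^2$ once $|t|$ is large.

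A clean repair avoids windows altogether:
\begin{enumerate}
\item For $t\ge0$, $|1+st|\ge1$ gives $g(t)\ge\frac{p(p-1)}2t^2\ge\frac p2 t^2$.
\item For $t=-\tau\le0$, set $\phi(\tau)=|1-\tau|^p-1+p\tau-\frac p4\tau^2$. Then $\phi(0)=0$ and $\phi'(\tau)=p\bigl[1-\frac\tau2-\operatorname{sgn}(1-\tau)|1-\tau|^{p-1}\bigr]\ge0$. On $[0,1]$ use $(1-\tau)^{p-1}\le1-\tau$; on $[1,2]$ use $1-\frac\tau2\ge0$; on $[2,\infty)$ use $(\tau-1)^{p-1}\ge\tau-1$.
\end{enumerate}
With $g\ge\frac p4t^2$ established everywhere, your $2^{-p}|t|^p$ piece and the averaging step go through as written.
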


\begin{remark}
Although only the upper bound is required for our analysis,
we also provide a matching lower bound.
This demonstrates that the argument in
Proposition~\ref{prop:fourth-drift}
is not merely tight, but also reveals the inherent suboptimality
of the naive approach highlighted before.
\end{remark}

Let
\begin{align}
z_k
&= \max(0,\min(2D,u_k)-D)
= \max(D,\min(2D,u_k)) - D
\in [0,D],
\end{align}
where $D>0$ is an arbitrary positive constant.
We first show that $z_k$ converges almost surely, and then show that this limit must be $0$ since there exists a subsequence $z_{n_k}$ that converges to $0$.

Indeed, we aim to show that
\begin{align}
\label{equation: p_larger_rare_event_drift}
\E[z_{k+1}^p \mid \mathcal F_k]
\le z_k^p + b_4 \alpha_k^p .
\end{align}

\begin{itemize}

\item \textbf{Case 1: $u_k \ge 2D$.}

In this case, $z_k = D$.
Since $z_{k+1} \le D$ almost surely, we immediately obtain
\begin{align}
\E[z_{k+1}^p \mid \mathcal F_k]
\le z_k^p .
\end{align}

\item \textbf{Case 2: $D \le u_k < 2D$.}

Here, $z_k = u_k - D$.
Observe that
\begin{align}
\big| \min(2D,u_{k+1}) - u_k \big|
\le |u_{k+1} - u_k|,
\qquad \text{a.s.}
\end{align}

Indeed, we verify this inequality by considering the possible values of $u_{k+1}$.
If $u_{k+1} < D$, then $\min(2D,u_{k+1}) = u_{k+1}$, and hence
\begin{align}
\big| \min(2D,u_{k+1}) - u_k \big|
= |u_{k+1} - u_k|
\le |u_{k+1} - u_k| .
\end{align}
If $D \le u_{k+1} < 2D$, we again have $\min(2D,u_{k+1}) = u_{k+1}$, so the same equality holds and the inequality is immediate.
Finally, if $u_{k+1} \ge 2D$, then $\min(2D,u_{k+1}) = 2D$.
Since in the present case $u_k < 2D$, it follows that
\begin{align}
\big| \min(2D,u_{k+1}) - u_k \big|
&= |2D - u_k|
= 2D - u_k
\le u_{k+1} - u_k
\le |u_{k+1} - u_k| .
\end{align}
Therefore, in all cases,
\begin{align}
\big| \min(2D,u_{k+1}) - u_k \big|
\le |u_{k+1} - u_k|
\end{align}
holds almost surely.

Applying Lemma~\ref{lemma: p_larger_2_lp_regression_drift} with
$x=z_k$ and $\Delta=\min(2D,u_{k+1})-u_k$, we obtain
\begin{align}
\E[z_{k+1}^p \mid \mathcal F_k]
&\le
\E\!\left[(\min(2D,u_{k+1})-D)^p \mid \mathcal F_k\right] \\
&=
\E\!\left[(\min(2D,u_{k+1})-u_k+z_k)^p \mid \mathcal F_k\right] \\
&\le
z_k^p
+ p z_k^{p-1}
\E[\min(2D,u_{k+1})-u_k \mid \mathcal F_k] \notag\\
&\quad
+ 2p^2 z_k^{p-2}
\E[(\min(2D,u_{k+1})-u_k)^2 \mid \mathcal F_k] \notag\\
&\quad
+ p^p
\E[(\min(2D,u_{k+1})-u_k)^p \mid \mathcal F_k] .
\end{align}

Using~\eqref{equation: almost_surely_drift_distance}, we further bound
\begin{align}
\E[z_{k+1}^p \mid \mathcal F_k]
&\le
z_k^p
+ p z_k^{p-1}
\E[u_{k+1}-u_k \mid \mathcal F_k] \notag\\
&\quad
+ 2p^2 z_k^{p-2} b_3^2 \alpha_k^2
\E[(u_k+\|w_k\|)^2 \mid \mathcal F_k] \notag\\
&\quad
+ (b_3 p)^p \alpha_k^p
\E[(u_k+\|w_k\|)^p \mid \mathcal F_k] .
\end{align}

Since $D \le u_k < 2D$, we may apply the moment bounds to obtain
\begin{align}
\E[z_{k+1}^p \mid \mathcal F_k]
&\le
z_k^p
+ p z_k^{p-1}
\E[u_{k+1}-u_k \mid \mathcal F_k] \notag\\
&\quad
+ 4 b_3^2 p^2 z_k^{p-2} \alpha_k^2
\bigl((2D)^2 + A_2 + B_2(2D)^2\bigr) \notag\\
&\quad
+ (2 b_3 p)^p \alpha_k^p
\bigl((2D)^p + A_p + B_p(2D)^p\bigr) \\
&\le
z_k^p
+ \underbrace{
p z_k^{p-1}
\E[u_{k+1}-u_k \mid \mathcal F_k]
+ C_1 z_k^{p-2} \alpha_k^2
}_{T_{\mathrm{mid}}}
+ C_2 \alpha_k^p .
\end{align}

It remains to show that $T_{\mathrm{mid}} = \mathcal O(\alpha_k^p)$.
Since $u_k \ge D$, for sufficiently small $\alpha_k$ we have
\begin{align}
\sqrt{(1-b_1\alpha_k)u_k^2 + b_2\alpha_k^2} - u_k
\le -\frac{b_1}{3}\alpha_k .
\end{align}

Choose $b_5 = \frac{3C_1}{p}$.
Then, whenever $z_k > b_5 \alpha_k$,
\begin{align}
p\frac{b_1}{3}\alpha_k z_k^{p-1}
\ge C_1 z_k^{p-2}\alpha_k^2 .
\end{align}
Otherwise, if $z_k < b_5 \alpha_k$, we obtain
\begin{align}
- p\frac{b_1}{3}\alpha_k z_k^{p-1}
+ C_1 z_k^{p-2}\alpha_k^2
\le C_1 b_5^{p-2} \alpha_k^p .
\end{align}

Absorbing constants yields
\begin{align}
\E[z_{k+1}^p \mid \mathcal F_k]
\le z_k^p + b_4 \alpha_k^p .
\end{align}

\item \textbf{Case 3: $u_k < D$.}

In this case, $z_k = 0$, and
\begin{align}
z_{k+1}
= z_{k+1} - z_k
\le |u_{k+1}-u_k|
\le b_3 \alpha_k (D+\|w_k\|),
\qquad \text{a.s.}
\end{align}

Indeed, if $u_{k+1} \ge 2D$, then $z_{k+1}=D<u_{k+1}-u_k$ since $u_k<D$.
If $D \le u_{k+1} < 2D$, then
$z_{k+1}=u_{k+1}-D \le u_{k+1}-u_k \le |u_{k+1}-u_k|$.
Finally, if $u_{k+1}<D$, then $z_{k+1}=0$ and the inequality is trivial.
Therefore, since $z_k = 0$
\begin{align}
\E[z_{k+1}^p \mid \mathcal F_k]
&\le
(b_3)^p (D^p + A_p + B_p D^p)\alpha_k^p \\
&=
z_k^p + b_4 \alpha_k^p .
\end{align}

\end{itemize}

To this end, let $b_4 = \max(b_{4,1},b_3^p(D^p + A_p + B_p D^p))$, we establish equation \eqref{equation: p_larger_rare_event_drift}. Applying the Supermartingale Convergence Theorem \ref{theorem: sct_theorem}, we get that $z_k$ converges almost surely.

Next, we show that $z_k$ converges almost surely to $0$. First, note that since $\sum \alpha_k^p < \infty$, we must have $\lim \alpha_k = 0$, so there exists $K$ such that for all $k \geq K$, we have $\alpha_k < 1$. Let $s(x) = \sum_{k \geq K} \alpha_k^x$, it's clear that $s(x)$ is an decreasing function with respect to $x$. Let $p_n = \lfloor p \rfloor \geq 2$, and consider the set $Q = \br{p - p_n + 1, p - p_n + 2, \dots, p - 1}$. We claim that either there exists a value $q \geq 2$ such that $s(q) = \infty $ and $s(q+1) < \infty$, or $s(1) = \infty$ and $s(2) < \infty$. Suppose otherwise, since $s(p) < \infty$, we must have $s(p-1) < \infty$ also. Thus, by repeating this argument, we get $s(p - p_n + 1) < \infty$. However, since $1<p - p_n + 1 \leq 2$ by definition, we must have $s(2) < \infty$. Since $s(1) = \infty$ by definition, we get a contradiction. Thus, there musts exists a $q \geq 2$ such that $s(q-1) = \infty$ and $s(q) < \infty$.

Now, by Equation \eqref{equation: p_larger_2_general_second_moment_drift}:

\begin{align}
    \E[u_{k+1}^2|F_k] \leq (1-b_1\alpha_k)u_k^2 + b_2\alpha_k^2.
\end{align}

This implies:

\begin{align}
    \E[\alpha_{k+1}^{q-2}u_{k+1}^2|F_k] \leq \alpha_k^{q-2}u_k^2 - b_1\alpha_k^{q-1}u_k^2 + b_2\alpha_k^q.
\end{align}

By Supermartinagle Convergence Theorem \ref{theorem: sct_theorem} and from $\sum \alpha_k^q < \infty$, we have $\sum \alpha_k^{q-1}u_k^2 < \infty$ almost surely. Now, suppose the contrary that $\lim z_k = L$ almost surely for some constant $L > 0$ for some sample path of $x_k$, there exists constant $\epsilon > 0$ and $K_1$ such that for all $k \geq K_1$, we have $z_k = \max(\min(2D,u_k),D) - D > \varepsilon$. Thus, for all $k \geq K_1$, we must have $u_k > D + \varepsilon$. However, since $\sum \alpha_k^{q-1} = \infty$ and $u_k > L$ for all $k \geq \max(K,K_1)$, we have $\sum \alpha_k^{q-1}u_k^2 = \infty$, contradiction. Thus, $z_k$ converge almost surely to 0 and therefore $\lim_{k \rightarrow \infty} u_k \in [0,D]$ almost surely.

Taking $D \rightarrow 0^+$, we have $u_k \rightarrow 0$ almost surely. To finish our proof, we prove the uniqueness of the solution: Indeed, let $x^\star, y^\star$ be fixed points of \eqref{problem: fixed_point_equation}, from Assumption \ref{assumption: general-drift-condition}, we have that
\begin{align}
    0 = \langle \Phi(y^\star-x^\star), \underbrace{H(y^\star)-y^\star}_{= 0} \rangle \leq -\eta \Phi(y^\star-x^\star).
\end{align}
However, since $\Phi \geq 0$ and $\eta > 0$, we must have that $\Phi(y^\star-x^\star) = 0$. On the other hand, by Equation \eqref{eqn: lyapunov-sandwich}, we have $0 \geq c_1 \|x^{\star} - y^{\star}\|^2$, which implies $x^{\star} = y^{\star}$. Thus, since $\Phi(x_k - x^{\star}) \rightarrow 0$ almost surely, we must have $x_k$ converge to the unique solution of \eqref{problem: fixed_point_equation}.
\end{proof}


\section{Conclusion and Future Work}
While the Strong Law of Large Numbers and the analysis of almost sure convergence of Stochastic Approximation are very well-studied topics in Applied Probability, establishing these results for a general noise condition is highly non-trivial. In our work, we generalize prior results on almost sure convergence of Stochastic Approximation \cite{neurodynamic, Borkar2008StochasticAA, zaiwei-envelope, mertikopoulos2020almost, jin2025stochastic} by establishing almost sure convergence of the Stochastic Approximation algorithm with nonlinear operators under a general drift condition and a general noise condition. We establish such results by utilizing the Lyapunov drift framework, which allows us to obtain almost sure convergence without expanding the iterates. To handle complications in the $p > 2$ case, we introduce a novel iterate projection technique that significantly simplifies the drift analysis. We believe that our results and techniques could pave the way for many exciting future works. In particular, one immediate extension of our work is to obtain finite-time m.s. guarantees for the Stochastic Approximation algorithm under a general noise condition. From there, one can attempt to generalize the concentration results in \cite{Chen2025-sa-concentration, khodadadian2025a-concentration} or apply the analysis to different noise profiles such as Markovian noise.

\section{Acknowledgment}
This work was partially supported by NSF grants EPCN-2144316 and CPS-2240982. H.H.N. was also partially supported by the IBM Ph.D. Fellowship. Q.N. was supported by the VNUHCM-University of Information Technology’s Scientific Research Support Fund.

\bibliographystyle{ieeetr}
\bibliography{refs} 

\appendix

\section{Proof for Proposition \ref{prop:fourth-drift}}
\label{sec: fourth-drift-proof}
\begin{proof}
Rewrite the recursion as
\begin{align}
x_{k+1} = (1-\alpha_k)x_k + \alpha_k w_k .
\end{align}

A direct binomial expansion gives
\begin{align}
x_{k+1}^4
&= (1-\alpha_k)^4 x_k^4
+ 4(1-\alpha_k)^3\alpha_k x_k^3 w_k \nonumber\\
&\quad + 6(1-\alpha_k)^2\alpha_k^2 x_k^2 w_k^2
+ 4(1-\alpha_k)\alpha_k^3 x_k w_k^3
+ \alpha_k^4 w_k^4 .
\end{align}

Taking conditional expectations and using
\begin{align}
\E[w_k \mid \mathcal F_k] &= 0, \\
\E[|w_k|^p \mid \mathcal F_k] &< \infty, \qquad p=2,3,4,
\end{align}
we obtain
\begin{align}
\E[x_{k+1}^4 \mid \mathcal F_k]
\le (1-\alpha_k)^4 x_k^4
+ C_1 \alpha_k^2 x_k^2
+ C_2 \alpha_k^3 |x_k|
+ C_3 \alpha_k^4 .
\end{align}

Since
\begin{align}
(1-\alpha_k)^4 \le (1-\alpha_k),
\end{align}
it remains to absorb the mixed terms.

Applying Young’s inequality \( ab \le \varepsilon a^2 + \frac{1}{4\varepsilon} b^2 \), we obtain
\begin{align}
\alpha_k^2 x_k^2
&= \alpha_k \cdot (\alpha_k x_k^2) \nonumber\\
&\le \varepsilon \alpha_k x_k^4
+ \frac{1}{4\varepsilon}\alpha_k^3 ,
\end{align}
and
\begin{align}
\alpha_k^3 |x_k|
&= \alpha_k^2 \cdot (\alpha_k |x_k|) \nonumber\\
&\le \alpha_k^2\!\left(
\varepsilon' x_k^2 + \frac{1}{4\varepsilon'}\alpha_k^2
\right) \nonumber\\
&= \varepsilon' \alpha_k^2 x_k^2 + O(\alpha_k^4).
\end{align}

Substituting the second bound into the first (after collecting constants), and choosing
\(\varepsilon,\varepsilon'>0\) sufficiently small, yields
\begin{align}
C_1 \alpha_k^2 x_k^2 + C_2 \alpha_k^3 |x_k|
\le \tfrac12 \alpha_k x_k^4 + C \alpha_k^3 .
\end{align}

Consequently,
\begin{align}
\E[x_{k+1}^4 \mid \mathcal F_k]
&\le x_k^4 - 4\alpha_k x_k^4 + O(\alpha_k^2 x_k^4)
+ \tfrac12 \alpha_k x_k^4 + C \alpha_k^3 \nonumber\\
&\le x_k^4 - c\,\alpha_k x_k^4 + C'\alpha_k^3 ,
\end{align}
for some constants \(c,C'>0\).
\end{proof}

\begin{remark}[On the order of the $\alpha_k^3$ term]


We believe that the $\alpha_k^3$ term cannot be eliminated if one performs a direct calculation. Indeed, consider the recursion
\begin{align}
x_{k+1} = (1-\alpha_k)x_k + \alpha_k w_k,
\end{align}
where $(w_k)$ is an i.i.d.\ sequence, independent of $\mathcal F_k$, with
$\E[w_k]=0$ and $\E[w_k^4]<\infty$, but $\E[w_k^3]\neq 0$.
For example, let
\begin{align}
w_k =
\begin{cases}
2, & \text{with probability } \tfrac13,\\
-1, & \text{with probability } \tfrac23.
\end{cases}
\end{align}
Then $\E[w_k]=0$ while $\E[w_k^3]=2$.

A direct computation gives
\begin{align}
\E[x_{k+1}^4 \mid \mathcal F_k]
= (1-\alpha_k)^4 x_k^4
+ 6(1-\alpha_k)^2\alpha_k^2 x_k^2 \E[w_k^2]
+ 4(1-\alpha_k)\alpha_k^3 x_k \E[w_k^3]
+ \alpha_k^4 \E[w_k^4].
\end{align}
Fixing $x_k \equiv x \neq 0$, the third-moment term contributes at order $\alpha_k^3$
and does not cancel in general.
This indicates that, without further symmetry or moment conditions, an
$\alpha_k^3$ remainder may naturally arise in fourth-moment drift bounds.
\end{remark}

\section{Proof details of other results}
\label{sec: proof-details}
\subsection{Proof for tightness of $\xi > 1/p$ (Theorem \ref{thm:tight})}
\label{ssec: impossibility-theorem-proof}
Fix $p\in[1,\infty)$, $\xi\in(0,1/p]$, $\alpha>0$, and $K\ge 1$. Take $T\equiv 0$ (a contraction with $\gamma=0$, fixed point $0$), so $x_{n+1}=(1-\alpha_n)x_n+\alpha_n\varepsilon_{n+1}$. Let the filtration be $\mathcal F_n=\sigma(\varepsilon_1,\ldots,\varepsilon_n)$. For each $n$, define independent mean-zero noises $\varepsilon_{n+1}$ by
\begin{align*}
    \varepsilon_{n+1}=\begin{cases}
s_n & \text{w.p. } q_n\\
-s_n & \text{w.p. } q_n\\
0   & \text{w.p. } 1-2q_n
\end{cases},\:
s_n:=\frac{4}{\alpha}(n+K)^{\xi},\:
q_n:=c\,(n+K)^{-\xi p},
\end{align*}
with any $c\in(0,1/2]$. Then $\mathbb E[\varepsilon_{n+1}| \mathcal F_n]=0$ (MDS) and
\begin{align*}
    \mathbb E\left[|\varepsilon_{n+1}|^p| \mathcal F_n\right]=2q_n s_n^{p}
=2c\left(\frac{4}{\alpha}\right)^p<\infty,
\end{align*}
implying that $\sup_n \mathbb E|\varepsilon_{n+1}|^p<\infty$.

Let $I_n:=\{|\varepsilon_{n+1}|=s_n\}$. The $I_n$ are independent with $\mathbb P(I_n)=2q_n=2c(n+K)^{-\xi p}$. Since $\xi p\le 1$, $\sum_n \mathbb P(I_n)=\infty$; by Borel–Cantelli (for independent events), $I_n$ occurs infinitely often a.s. On $I_n$, we have $\alpha_n s_n=\alpha(n+K)^{-\xi}\cdot \frac{4}{\alpha}(n+K)^{\xi}=4$, hence
\begin{align*}
    x_{n+1}-x_n=-\alpha_n x_n \pm 4.
\end{align*}
For every real $u$, at least one of $|4-u|$ or $|-4-u|$ is $\geq 4$; since the sign $\pm$ is an independent symmetric coin flip, we have
\begin{align*}
    \mathbb P \left(|x_{n+1}-x_n|\geq 4 |\ \mathcal F_n,\ I_n \right)\ge \frac12.
\end{align*}
Define $J_n:=I_n\cap\{|x_{n+1}-x_n|\geq 4\}$. Then $\sum_n q_n=\infty$ and
\begin{align*}
    \mathbb P(J_n| \mathcal F_n)\ \geq \frac12\,\mathbb P(I_n| \mathcal F_n)\ =\ q_n.
\end{align*}
By Lévy’s conditional Borel–Cantelli lemma, $J_n$ occurs infinitely often a.s. Thus $|x_{n+1}-x_n|\geq 4$ infinitely often a.s., so $(x_n)$ cannot converge.

\begin{remark}
    While it seems that Theorem \ref{thm:tight} disproves SLLN for $\xi = p = 1$, it is crucial to note that the noise construction in this proof is a martingale difference noise sequence, and thus does not preclude the guarantee in the i.i.d. noise setting.
\end{remark}

\subsection{Proof for Theorem \ref{theorem: p_smaller_2_nonexpansive_multiplicative}}
\label{sec: non-expansive-proof-heavy-tailed-regime}
\begin{proof}
    First, it was established in \cite{rodomanov2020smoothness} the inequality \eqref{equation: sub_quadratic_drift}, thus using the value $v = x_{k} - x^\star$, $w = \alpha_k(H(x_k) - x_k + w_k)$ (where $x^\star$ is any solution of FPE), we have the estimation:

    \begin{align}
        \|x_{k+1} - x^\star\|^p &\leq \|x_k - x^\star\|^p + p \alpha_k \frac{\langle x_k - x^\star, H(x_k) - x_k + w_k \rangle}{\|x_k - x^\star\|^{2-p}} + 2^{2-p}\alpha_k^p\|H(x_k) - x_k + w_k\|^p \\
        &\leq \|x_k - x^\star\|^p + p \alpha_k \frac{\langle x_k - x^\star, H(x_k) - x_k + w_k \rangle}{\|x_k - x^\star\|^{2-p}} \\
        &+ 2^{2-p}\alpha_k^p (\|H(x_k) - x^\star\| + \|x_k - x^\star\| + \|w_k\|)^p \\
        &\leq \|x_k - x^\star\|^p + p \alpha_k \frac{\langle x_k - x^\star, H(x_k) - x_k + w_k \rangle}{\|x_k - x^\star\|^{2-p}} \\
        &+ 2^{2-p}\alpha_k^p3^{p-1} (2\|x_k - x^\star\|^p + \|w_k \|^p ).
     \end{align}

     Next, we notice that by the nonexpansive property of the operator $H$, we have:

     \begin{align}
         &\|H(x_k) - x^\star\|^2 \leq \|x_k - x^\star\|^2 \text{ }\forall x^\star \in \mathcal{X} \\
         \Longrightarrow &\|H(x_k) - x_k\|^2 + 2 \langle H(x_k) - x_k, x_k - x^\star \rangle + \|x_k - x^\star\|^2 \leq  \|x_k - x^\star\|^2  \\
         \Longrightarrow & \langle H(x_k) - x_k, x_k - x^\star \rangle \leq \frac{-\|H(x_k) - x_k\|^2}{2} \\
         \Longrightarrow & \|x_k - x^\star\|^{p-2} \langle H(x_k) - x_k, x_k - x^\star \rangle \leq \frac{-\|x_k - x^\star\|^{p-2} \|H(x_k) - x_k\|^2}{2}. \\
     \end{align}

     Thus, by taking conditional expectation , we get:

     \begin{align}
        \label{equation: p<2_nonexpansive_drift_before_inf}
         \E[\|x_{k+1} - x^\star\|^p |F_k] &\leq \|x_k - x^\star\|^p + p \alpha_k \frac{\langle x_k - x^\star, H(x_k) - x_k \rangle}{\|x_k - x^\star\|^{2-p}} \\
         &+ 2^{2-p}3^{p-1}\alpha_k^p (2\|x_k - x^\star\|^p + \E[\|w_k \|^p] ) \\
         &\leq (1 + C\alpha_k^p)\|x_k - x^\star\|^p + p \alpha_k \frac{\langle x_k - x^\star, H(x_k) - x_k \rangle}{\|x_k - x^\star\|^{2-p}} + C\alpha_k^p \E[\|w_k\|^p] \\ 
        &\leq (1 + C\alpha_k^p)\|x_k - x^\star\|^p - \frac{p}{2}\alpha_k \|x_k - x^\star\|^{p-2} \|H(x_k) - x_k\|^2 + C\alpha_k^p \E[\|w_k\|^p].
     \end{align}

     Thus, additionally taking $\inf$ in Equation \eqref{equation: p<2_nonexpansive_drift_before_inf} yields:

     \begin{align}
    \E\!\left[ \inf_{x^\star \in \mathcal{X}} 
        \|x_{k+1} - x^\star\|^p \,\big|\, F_k \right] 
    &\leq (1 + C\alpha_k^p)
        \inf_{x^\star \in \mathcal{X}} \|x_k - x^\star\|^p 
        - \frac{p}{2}\alpha_k
          \sup_{x^\star \in \mathcal{X}} 
          \|x_k - x^\star\|^{p-2} 
          \|H(x_k) - x_k\|^2  \notag\\
    &\quad +\, C\alpha_k^p 
        \E[\|w_k\|^p] \\[1em]
    &\leq (1 + C\alpha_k^p + B\alpha_k^p)
        \inf_{x^\star \in \mathcal{X}} \|x_k - x^\star\|^p 
         \notag\\
    &\quad - \frac{p}{2}\alpha_k
          \sup_{x^\star \in \mathcal{X}} 
          \|x_k - x^\star\|^{p-2} 
          \|H(x_k) - x_k\|^2 +\, C\alpha_k^p A.
    \end{align}
     Thus, applying the Supermartingale Convergence Theorem \ref{theorem: sct_theorem}, we have prove $\inf_{x^\star \in \mathcal{X}}\|x_k - x^\star\|^p$ converges to a value $c$ almost surely, from which we can finish by reusing an standard argument as in \cite{zaiwei-envelope}, which we spell out the detail: suppose that this limit value is not zero, then since $\|x_k - x^\star\| \geq \epsilon \textbf{ } \forall x^\star \in \mathcal{X}$, where $\epsilon$ is arbitrary, note that we also have $\inf_{x^\star \in \mathcal{X}}\|x_k - x^\star\|^p$ converges almost surely to $c$, thus upper bound by constant $M$, we consider the set of $\{ x: \epsilon \leq \inf_{x^\star \in \mathcal{X}}\|x_k - x^\star\|^p \leq M \}$. This set is clearly compact, thus on this set $\|H(x) - x\|$ has a minimum value that is at least some constant, thus:

     \begin{align}
         \sum\frac{p}{2}\alpha_k\sup_{x^\star \in \mathcal{X}} \|x_k - x^\star\|^{p-2} \|H(x_k) - x_k\|^2 > \infty
     \end{align}

     which is wrong because Theorem \ref{theorem: sct_theorem} implies the opposite.
\end{proof}

\end{document}